\documentclass{amsart}

\usepackage{graphicx} 
\usepackage{amsmath,amssymb,amsthm,tikz}
\usepackage{xcolor}
\usepackage{esint}
\usepackage{pgfplots}
\usepackage{scalerel}
\pgfplotsset{compat=1.18}
\usepackage{centernot}

\usepackage{hyperref}
%LINKS
\hypersetup{
colorlinks,
urlcolor=azul,
linkcolor=azul,
citecolor=azul,
%pdftitle={TitlePDF}
} 

%COLOR
\definecolor{azul}{RGB}{33,64,154}

\def \R {\mathbb{R}}

\def \M {\text{Sym}}
\def \I {\mathcal{I}}
\def \J {\mathcal{J}}
\def \MM {\mathcal{M}}

\newtheorem{theorem}{Theorem} 
\newtheorem{lemma}{Lemma}
\newtheorem{proposition}{Proposition}  

\newtheorem{definition}{Definition}
\newtheorem{remark}{Remark}

\numberwithin{equation}{section}

\title[On fractional quasilinear equations with elliptic degeneracy]{On fractional quasilinear equations with \\ elliptic degeneracy}

\author[D.J. Ara\'ujo]{Dami\~ao J. Ara\'ujo}
\address{Department of Mathematics, Universidade Federal da Para\'iba, 58059-900, Jo\~ao Pessoa-PB, Brazil}{}
\email{araujo@mat.ufpb.br}

\author[D. dos Prazeres]{Disson dos Prazeres}
\address{
Departamento de Matem\'atica , Universidade Federal de Sergipe,
Sala 19, São Cristóvão-SE, Brazil}{}
\email{disson@mat.ufs.br}

\author[E. Topp]{Erwin Topp}
\address{
Instituto de Matemáticas, Universidade Federal do Rio de Janeiro, Rio de Janeiro - RJ, 21941-909, Brazil, and Departamento de Matem\'atica y C.C., Universidad de Santiago de Chile, Casilla 307, Santiago, Chile}{}
\email{etopp@im.ufrj.br; erwin.topp@usach.cl}

\begin{document}

\subjclass[2022]{35B65, 35D40, 35R09, 35J70}

\keywords{Nonlocal operator, Regularity estimates, Degenerate elliptic operators}

\begin{abstract}
In this work, we present a systematic approach to investigate the existence, multiplicity, and local gradient regularity of solutions for nonlocal quasilinear equations with local gradient degeneracy. Our method involves an interactive geometric argument that interplays with uniqueness property for the corresponding homogeneous problem, leading with gradient H\"older regularity estimates. This approach is intrinsically developed for nonlocal scenarios, where uniqueness holds for the local homogeneous problem. We illustrate our results by showing classes of exterior data that exhibit multiple solutions, while also highlighting relevant cases where uniqueness is confirmed.
\end{abstract}  

\date{\today}

\maketitle

\tableofcontents

\section{Introduction}\label{secIntroduction}

The mathematical analysis of nonlocal equations constitutes a relevant chapter in the modern theory of partial differential equations, being, in the last two decades, the subject of an exhaustive study due to its presence in an ample class of mathematical frameworks, and also due for its applications in different models in natural sciences and engineering. Since the paper of Caffarelli and Silvestre~\cite{CS}, we have learned that this is the paradigm of what is nowadays understood as an \textsl{uniformly elliptic nonlocal operator}. 

Despite the importance of this class of operators, many mathematical models involve prototypes whose ellipticity degenerates along an \textit{a priori} unknown region, that might depend on the solution itself.  The corresponding scenario for the non-divergence form of second-order elliptic equations with gradient degeneracy deals with equations of the type 
$$
\mbox{trace}(A(x,Du)D^2u)=f \quad \mbox{in } \; \Omega \subset \mathbb{R}^N,
$$
for diffusion matrices $A:\Omega \times \mathbb{R}^N \to \M(N)$ satisfying 
$$
\lambda |\xi|^\gamma  I_{N} \leq A(x,\xi) \leq \Lambda |\xi|^\gamma I_{N},
$$
with $\gamma > 0$ and \textit{ellipticity} constants $0<\lambda\leq \Lambda$. Here, we denote $\M(N)$ the set of symmetric $N \times N$ matrices, and $I_{N}$ the identity matrix. The equation above degenerates along the set of critical points $\mathcal{S}(u):=\{x \in \Omega \,: \, |Du(x)|=0\}$ of a given solution $u$. Compared with the uniformly elliptic case, this fact imposes less efficient diffusion features near such a region, and therefore the study of analytical and geometric properties for corresponding equations becomes more delicate.

The associated non-local scenario corresponds to the integro-differential operator
$$
Lu(x)=\frac{1}{2}\int_{\mathbb{R}^N}\frac{u(x+a(Du(x)) \cdot z)+u(x-a(Du(x))\cdot z)-2u(x)}{|z|^{N+2s}}dz, 
$$
where $A(x,\xi)=a(\xi)I_{N\times N}$, see \cite{ChJ}. Particularly, 
\begin{equation}\nonumber
Lu=|Du|^\gamma \Delta^s u,
\end{equation}
arises as the main prototype for the former class, where, for parameters $s \in (0,1)$, $\Delta^s$ denotes the fractional Laplacian of order $2s$:
$$
\Delta^s u(x) := -(-\Delta)^s u(x) = C_{N,s} \mathrm{P.V.} \int_{\R^N} \frac{u(y) - u(x)}{|x - y|^{N + 2s}}dy.
$$
$\mathrm{P.V.}$ stands for the Cauchy Principal Value, and $C_{N, s} > 0$ the normalizing constant, see~\cite{Hitch}. In particular, this leads to the stable convergence $\Delta^s \to \Delta$, by letting $s \uparrow 1$. 

\smallskip

In this paper, we are interested in Dirichlet problems with the form
\begin{equation}\label{eq}
\left \{ \begin{array}{rll} |Du|^\gamma \Delta^s u & = f \quad & \mbox{in} \ \Omega \\
u & = g \quad & \mbox{in} \ \mathbb{R}^N\setminus \Omega. \end{array} \right .
\end{equation}
Here, $\Omega \subset \R^N$ is a bounded domain with $C^2$ boundary, the source term $f$ is continuous and bounded in $\Omega$, and the exterior data $g$ is continuous in $\mathbb{R}^N\setminus \Omega$, with adequate growth at infinity.

The analogous local equation
\begin{equation}\label{eqloc}
|Du|^\gamma \Delta u  = f
\end{equation}
was extensively studied during the last two decades, see for instance~\cite{BB,BD,IS}. Existence of viscosity solutions for \eqref{eqloc} and more general degenerate elliptic problems can be obtained by stability through the vanishing viscosity method. Uniqueness is more complicated and requires further assumptions on the source term $f$. This is the case when $f$ is constant, see \cite{BB,BD}; $f$ is strictly positive or strictly negative in $\Omega$, see \cite{J,LW}. 

A breakthrough of \cite{IS} establishes local $C^{1,\alpha}$ regularity estimates for solutions to \eqref{eqloc}, for some universal exponent $\alpha \ll 1$. Optimal interior regularity estimates is proved in \cite{ART}, for
\begin{equation}\label{alpha1}
\alpha= \frac{1}{1+\gamma}.
\end{equation}
See also \cite{AS} for improved boundary regularity estimates. For this, the \textit{canceling property}
$$
|Du|^\gamma \Delta u = 0 \quad \Longleftrightarrow \quad  \Delta u = 0,
$$
see \cite[Section 5]{IS}, is crucially used, since the homogeneous case ($f \equiv 0$) enjoys higher regularity. This feature plays a crucial role in obtaining interior gradient oscillation estimates for solutions of~\eqref{eqloc}. 

In contrast with the local case, even fixed exterior data $g$, nonlocal homogeneous equations like~\eqref{eq} may present multiple solutions, and so, solutions may not be $s$-harmonic, see examples in Section \ref{sec-examples} (and also Figure \ref{fig:Kariri} below).

\begin{figure}[h]
\centering
\begin{tikzpicture}[scale=1.0]
\begin{axis}[
%    hide axis,
    color=gray,
    very thin,
    xtick = {-1,1},
    ytick = {0},
%    axis lines = left,
%    xlabel = \(x\),
%    ylabel = {\(f(x)\)},
]
%function1
\addplot [
    domain=-4.0:-1.0, 
    samples=100, 
    color=black!60,
    very thick,
]
{((1-x)^(2/3))/(2^(2/3))};
%function2
\addplot [
    domain=-1.0:1.0, 
    samples=100, 
    color=black!20!white,
    very thick,
    ]
    {1} node[above, midway] {$v_s$};
%function3
\addplot [
    domain=1.0:4.0,  
    samples=100, 
    color=black!60,
    very thick, 
]
{((1+x)^(2/3))/(2^(2/3))} node[right] {$g_s$};
%function4
\addplot [
    domain=-1.0:1.0, 
    samples=100, 
    color=gray!70!white, 
    very thick,
    ]
    {((1-x)^(2/3))/(2^(2/3))+((1+x)^(2/3))/(2^(2/3))} 
    node[above, midway] {$u_s$};
\end{axis}
\end{tikzpicture}
\caption{For the exterior data $g_s(x)=(1+x)_+^s+(1-x)_+^s$ defined in $\mathbb{R}\setminus (-1,1)$, we observe that $u_s(x)=(1+x)_+^s+(1-x)_+^s$ is the $s$-harmonic in $(-1,1)$. However, $v_s(x)=2^s$ also solves \eqref{eq} for $f=0$. In parallel, according to the uniqueness property for the local scenario, we observe that functions $u_s(x)$ and $v_s(x)$ converge to $u_1(x)=2$, as $s\uparrow 1$.}
\label{fig:Kariri}
\end{figure}
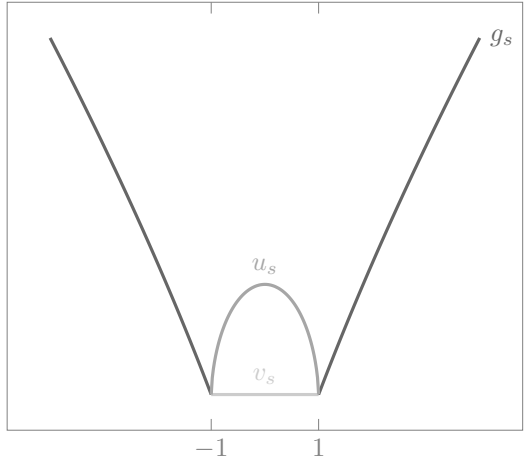

Since the lack of uniqueness for the homogeneous case ($f\equiv 0$) is equivalent to  
$$
|Du|^\gamma \Delta^s u = 0 \quad \centernot\implies \quad  \Delta^s u = 0 ,
$$
we distinguish whenever we mention a $s$-harmonic function -- classically $\Delta^s u = 0$ -- and a \textit{degenerate $s$-harmonic function} -- solutions for $|Du|^\gamma \Delta^s u = 0$ 
. Notice that $s$-harmonic functions are in particular degenerate $s$-harmonic. Also, using the language of viscosity solutions, we observe that, for any $\gamma>0$, the following equivalence holds
$$
|Du|^\gamma \Delta^s u = 0 \; \mbox{ in } \Omega \quad \Longleftrightarrow \quad  \Delta^s u = 0  \; \mbox{ in } \{|Du|\neq 0\} \cap \Omega.
$$
The later relates to non-local elliptic problems with patches of zero gradient, see \cite{CS1,CSS} for local nonlinear scenarios.
In view of this, we denote the nonempty set of degenerate $s$-harmonic functions by $\mathcal{H}(s, g, \Omega)$. 

In this paper, we shall focus on scenarios where the problem
\begin{equation}\label{eqhom}
\left \{ \begin{array}{rll} |Du| \Delta^s u & = 0 \quad & \mbox{in} \ \Omega \\
u & = g \quad & \mbox{in} \ \mathbb{R}^N\setminus \Omega. \end{array} \right .
\end{equation}
has unique solution. Since this feature depends on $g:\mathbb{R}^N \setminus \Omega \to \mathbb{R}$, we define 
$$
\mathcal{P}(\Omega):=\{g: \mathbb{R}^N \setminus \Omega \to \mathbb{R} \; | \; \mbox{problem } \eqref{eqhom} \mbox{ has unique solution} \}.
$$ 

To address these difficulties, we provide an interactive geometric argument that effectively exploit  uniqueness properties of the corresponding homogeneous problem. 
More specifically, we apply the tangential analysis method introduced by L. Caffarelli, in such a way that uniqueness of the approximating problems is preserved step by step in dyadic balls. This approach is properly designed for nonlocal scenarios where, as previously mentioned, the behavior of solutions of homogeneous equations differs from their local counterparts.

\smallskip

Our first main theorem concerns interior optimal regularity estimates. 

%%%%%
\begin{theorem}\label{teoregintro}
Let $s\in (\frac{1}{2},1)$, $\gamma > 0$, $f \in L^\infty(\Omega)$, and assume there  exists $M > 0, \sigma \in (0,2s)$ such that
$
|g(x)| \leq M(1 + |x|)^\sigma 
$
for $x \in \mathbb{R}^N\setminus \Omega$. Assume further that
$g \in \mathcal{P}(\Omega)$. 
Let $u$ be a viscosity solution for \eqref{eq}. Then $u$
is locally $C^{1,\alpha}$ in $\Omega$, for $\alpha=\alpha(\gamma,s)$ given by
\begin{equation}\label{alphas}
\alpha(\gamma,s)=\frac{2s-1}{1+\gamma}.
\end{equation}
\end{theorem}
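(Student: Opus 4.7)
The plan is to run Caffarelli's tangential analysis dyadically: at each scale one compares the solution with a smooth profile inherited from the homogeneous problem, and upgrades the affine approximation accordingly. The exponent $\alpha(\gamma,s)=(2s-1)/(1+\gamma)$ is chosen precisely so that the equation $|Du|^\gamma\Delta^s u = f$ is invariant under the rescaling $u(x)\mapsto u(\rho x)/\rho^{1+\alpha}$, which is what will drive the iteration. Fixing an interior point $x_0\in\Omega$, after translation and a preliminary scaling, I would reduce matters to a normalized viscosity solution $u$ in $B_1$ with $\|u\|_{L^\infty(\R^N)}\leq 1$ (incorporating the growth condition on $g$, whose subcritical exponent $\sigma<2s$ guarantees controllable tails under rescaling) and source bound $\|f\|_{L^\infty}\leq \epsilon_0$, for $\epsilon_0$ to be chosen small.

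\textbf{Approximation by a smooth profile via uniqueness.} The crux is the following statement: for every $\eta>0$ there exists $\epsilon_0(\eta)>0$ such that any normalized solution as above is $\eta$-close in $L^\infty(B_{1/2})$ to an $s$-harmonic function. I would prove this by compactness. Assume it fails and pick a sequence $(u_k,f_k)$ violating the claim with $\|f_k\|_\infty\to 0$; a preliminary uniform continuity estimate combined with tail control extracts a subsequence converging uniformly to some $u_\infty$. Stability of viscosity solutions for nonlocal operators under uniform convergence forces $|Du_\infty|^\gamma\Delta^s u_\infty=0$, so $u_\infty$ lies in $\mathcal H(s,u_\infty|_{\R^N\setminus B_1},B_1)$. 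At this point the uniqueness hypothesis $g\in\mathcal P(\Omega)$, transferred to the limiting exterior data of the approximating problem, collapses $u_\infty$ onto its $s$-harmonic extension, which is smooth in $B_{1/2}$. Combining this $L^\infty$-approximation with the second-order Taylor expansion of the limit profile at the origin, and choosing $\rho$ small (using $1+\alpha<2$) together with $\eta\ll\rho^{1+\alpha}$, one produces an affine $\ell_1$ with $|\ell_1(0)|+|D\ell_1|\lesssim 1$ satisfying $\|u-\ell_1\|_{L^\infty(B_\rho)}\leq \rho^{1+\alpha}$.

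\textbf{Iteration.} Iterating in dyadic balls produces affine $\ell_k=a_k+b_k\cdot x$ with $\|u-\ell_k\|_{L^\infty(B_{\rho^k})}\leq \rho^{k(1+\alpha)}$ and $|a_{k+1}-a_k|+\rho^k|b_{k+1}-b_k|\lesssim \rho^{k(1+\alpha)}$, the latter yielding the $C^{1,\alpha}$ conclusion at $x_0$ in the limit with uniform constants. The rescaled $v_k(x)=(u-\ell_k)(\rho^k x)/\rho^{k(1+\alpha)}$ satisfies the shifted equation $|Dv_k+\xi_k|^\gamma\Delta^s v_k = f(\rho^k\,\cdot)$ in $B_1$, with $\xi_k=\rho^{-k\alpha}b_k$. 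A dichotomy is then required: when $|\xi_k|\lesssim 1$ the equation stays of the same degenerate form (a translation $v_k\mapsto v_k+\xi_k\cdot x$ absorbs $\xi_k$) and the approximation lemma applies to $v_k$; when $|\xi_k|\gg 1$ one has $|Dv_k+\xi_k|^\gamma\gtrsim|\xi_k|^\gamma$ so that $\Delta^s v_k$ has very small right-hand side, and the classical $C^{2s}$ interior estimate for the fractional Laplacian (note $2s-1>\alpha$) delivers the improvement for free.

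\textbf{Main obstacle.} The delicate point is the approximation step. Unlike the local case, where the canceling property makes the limit profile automatically harmonic, here one must ensure that the uniqueness hypothesis $g\in\mathcal P(\Omega)$ genuinely propagates through the successive rescalings into uniqueness for each approximating problem on a dyadic ball, while simultaneously controlling the growth of the rescaled exterior data so that the tails of $\Delta^s v_k$ remain under control. This is precisely the authors' device of preserving uniqueness ``step by step in dyadic balls,'' and it is where the subcritical growth restriction $\sigma<2s$ plays its essential role.
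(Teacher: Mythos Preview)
Your proposal is correct and follows essentially the same approach as the paper: a compactness-based approximation lemma feeding an affine iteration at the scaling exponent $\alpha=(2s-1)/(1+\gamma)$. Your dichotomy on the size of $\xi_k$ is a cosmetic variant of what the paper does inside its approximation lemma (where the case $|q_k|\to\infty$ is handled by normalization), and the ``main obstacle'' you flag is precisely what the paper isolates into three short lemmas---invariance of $\mathcal P$ under affine rescalings, restriction of $\mathcal P$ to subdomains via the $I$-harmonic extension, and stability of $\mathcal P$ under locally uniform limits of exterior data---which are exactly the ingredients you would need to supply to make the propagation of uniqueness through the iteration rigorous.
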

%%%%%

\medskip

Note that exponent $\alpha$ in \eqref{alpha1}, related to the local scenario, is exactly derived as
$$
\alpha(\gamma,1):=\lim_{s \uparrow 1} \alpha(s,\gamma).
$$
In Section \ref{secreg}, the above theorem is presented in a more general fully nonlinear context, see Theorem~\ref{teoreg}, in which, interior $C^{1, \beta}$ estimates are obtained for the exponent 
$$
\beta = \min \Big{\{} \alpha_K^-,\frac{2s-1}{1+\gamma} \Big{\}},
$$ 
where $C^{1,\alpha_K}$ is regularity for the harmonic function associated with the nonlocal uniformly elliptic operator assumed in \eqref{LATAM}, and $\alpha_K^-$ stands any positive number strictly less the $\alpha_K$. Since $s$-harmonic functions are classical, Theorem~\ref{teoregintro} is obtained, in particular, for $\beta=\alpha(\gamma,s)$ as in \eqref{alphas}. For some $C=C(\gamma,s,N)$, the function
$$
C|x|^{\frac{2s+\gamma}{1+\gamma}},
\quad x \in \mathbb{R}^N
$$
solves \eqref{eq} for $f\equiv 1$, and so, the optimality of exponent $\alpha$ in \eqref{alphas} can be verified. We also observe that the same optimal exponent can be obtained for convex operators for which higher regularity estimates are available, see~\cite{CSAnnals, S}. 

In view of Theorem~\ref{teoregintro}, exterior condition $g$ emerges as a decisive ingredient for detecting uniqueness of degenerate $s$-harmonic functions. Exterior datum $g\equiv constant$ are emblematic, where by the strong maximum principle, we guarantee  that $g\in \mathcal{P}(\Omega)$. Other relevant examples are provided in Section~\ref{sec-examples}. See \cite{OS} for a similar context, in which interior regularity relies on choosing appropriate exterior datum. 

Our second main result establishes qualitative properties for multiplicity of solutions to the homogeneous case. In view of this, we introduce the standard notation: for a given measurable set $A \subset \R^N$ and parameter $\sigma \in (0,2)$, we shall denote the space
\begin{equation*}
L^1_{2s}(A) := \Big{\{} u \in L^1_{loc}(A) : \int_{A} \frac{|u(y)|}{1 + |y|^{N + 2s}}dy < +\infty \Big{\}}.
\end{equation*}

\smallskip

Our second main result is the following:

%%%%%%%%
\begin{theorem}\label{teomaxmin}
Let $s \in (0,1)$ and $\gamma > 0$. Assume $\Omega \subset \R^N$ a bounded domain with $C^2$ boundary, $g \in L^1_{2s}(\mathbb{R}^N\setminus \Omega) \cap C(\mathbb{R}^N\setminus \Omega)$, and $w$ the unique solution of problem \eqref{eqhom}. The following statements hold:

\begin{enumerate}
\item[(i)]  There exist $\bar u, \underline u \in \mathcal{H}(s,g,\Omega)$, such that for each $u \in \mathcal{H}(s,g,\Omega)$, 
$$
\underline u \leq u \leq \bar u.
$$

\item[(ii)] Assume $\bar u$ (resp. $\underline u$) touches $w$ at some point $x \in \Omega$, then 
$$
\bar u = w \; \mbox{(resp. $\underline u = w$)}.
$$

\item[(iii)] Assume $\bar u$ touches $\underline u$ at some point $x \in \Omega$, then 
$$
g \in \mathcal{P}(\Omega).
$$

\item[(iv)] Assume  $|Dw|\neq0$ in $\Omega$, then 
$$
g \in \mathcal{P}(\Omega).
$$

\item[(v)] If $w$ has a strict local maximum (resp. strict local minimum) in $\Omega$. Then, 
$$
\underline u < w \; \mbox{(resp. $\bar u > w$)}.
$$
\end{enumerate}
\end{theorem}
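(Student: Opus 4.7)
For item (i), the approach is Perron's method adapted to the nonlocal degenerate setting. Define
\[
\bar u(x) := \sup\{v(x) : v \in \mathrm{USC}(\R^N),\ v \text{ is a viscosity subsolution of } |Dv|^\gamma \Delta^s v = 0 \text{ in } \Omega,\ v \leq g \text{ on } \R^N\setminus\Omega\},
\]
with a uniform a priori upper bound coming from a smooth supersolution built using $g \in L^1_{2s}(\R^N\setminus\Omega)$ and a barrier for $\partial\Omega$. Standard viscosity stability gives that $\bar u$ is a subsolution; its lower semicontinuous envelope is a supersolution by the usual bump perturbation, and the $C^2$ regularity of $\partial\Omega$ furnishes barriers ensuring continuous attainment of $g$ on $\partial\Omega$. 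Every $u \in \mathcal{H}(s,g,\Omega)$ is an admissible competitor, so $u \leq \bar u$ pointwise. The minimal $\underline u$ is built by the mirror-image construction as the pointwise infimum of admissible supersolutions.

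For item (ii), I would prove a strong-type nonlocal maximum principle for the non-negative function $v := \bar u - w$, which vanishes on $\R^N\setminus\Omega$ and at the touching point $x_0 \in \Omega$. The pivotal step tests the viscosity supersolution property of $\bar u$ against a cutoff function $\tilde\phi$ equal to $w$ on a ball $B_r(x_0)\subset\Omega$ and equal to $\bar u$ outside. Using $\Delta^s w(x_0) = 0$, the computation reduces the nonlocal evaluation to the tail term
\[
\Delta^s \tilde\phi(x_0) = C_{N,s}\int_{\R^N\setminus B_r(x_0)} \frac{v(y)}{|y - x_0|^{N+2s}}\,dy \geq 0,
\]
with equality only if $v\equiv 0$ off $B_r(x_0)$. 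The viscosity inequality $|Dw(x_0)|^\gamma \Delta^s \tilde\phi(x_0) \leq 0$ then forces the integral to vanish whenever $|Dw(x_0)|\neq 0$, and sending $r\to 0^+$ yields $v\equiv 0$. The hardest case is the degenerate one $|Dw(x_0)|=0$, which I would handle by a doubling-variables perturbation: tilt to $w(x)+\epsilon\,\langle e,x-x_0\rangle$ so that the touching occurs at a nearby point where the (perturbed) test has non-vanishing gradient, apply the computation there, and pass $\epsilon \to 0^+$. The symmetric argument handles $\underline u$.

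Items (iii)--(v) follow quickly from (i)--(ii) plus one construction. For (iii), the sandwich $\underline u \leq w \leq \bar u$ together with $\underline u(x) = \bar u(x)$ forces $w(x) = \bar u(x)$, so (ii) gives $\bar u = w = \underline u$ and hence $g \in \mathcal{P}(\Omega)$. For (iv), at any point $y_0$ attaining $M := \sup_{\bar\Omega}(\bar u - w)$, the analogous cutoff-test computation applied at this maximum yields
\[
\Delta^s \tilde\phi(y_0) = C_{N,s}\int_{\R^N\setminus B_r(y_0)} \frac{v(y) - M}{|y - y_0|^{N+2s}}\,dy \leq 0,
\]
and the subsolution inequality combined with $|Dw(y_0)|\neq 0$ forces $v \equiv M$ off $B_r$, contradicting $v = 0$ on $\R^N\setminus\Omega$ unless $M = 0$; hence $\bar u = w$, and symmetrically $\underline u = w$. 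For (v), a strict local maximum $x_0$ of $w$ allows the truncation $u^* := \min\{w,c\}$, with $c$ slightly below $w(x_0)$, to produce a member of $\mathcal{H}(s,g,\Omega)$ (constant on a flat patch containing $x_0$, where the equation is trivially satisfied, and $s$-harmonic elsewhere) with $u^*(x_0) < w(x_0)$; thus $\underline u \leq u^* \not\equiv w$, and (ii) then upgrades this to $\underline u < w$ throughout $\Omega$.
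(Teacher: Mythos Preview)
Your route diverges substantially from the paper's. There, $\bar u$ and $\underline u$ are built as limits of the unique solutions $u^\eta$, $u_\eta$ to the perturbed problems $|Du|^\gamma\Delta^s u=\mp\eta$ (for which comparison holds, Proposition~\ref{comparison}), and these approximants are then used in the proofs of (ii), (iv) and (v). The key gain is that $u^\eta$ satisfies the \emph{non-degenerate} inequality $\Delta^s u^\eta\le 0$ in the viscosity sense (no test from below can have zero gradient), so one can compare against barriers for the uniformly elliptic operator $\Delta^s$ and bypass the vanishing-gradient obstruction entirely.

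Your argument has a genuine error in (v). The truncation $u^*=\min\{w,c\}$ is \emph{not} a member of $\mathcal H(s,g,\Omega)$: on the flat patch the equation is trivial, but at any point $x$ with $u^*(x)=w(x)<c$ and $Dw(x)\neq 0$ (such points exist since $w$ is real-analytic and non-constant), test the subsolution property with $\varphi=w$ from above; using $\Delta^s w(x)=0$ the viscosity evaluation becomes
\[
|Dw(x)|^\gamma\,C_{N,s}\int_{\{w>c\}}\frac{c-w(y)}{|x-y|^{N+2s}}\,dy<0,
\]
because the nonlocal tail sees the strict drop $u^*<w$ on the flat patch. You cannot truncate a nonlocal solution locally and remain a solution. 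The paper instead argues by contradiction: if $\underline u=w$ then $u_\eta\to w$ uniformly, $u_\eta$ inherits a local maximum near the strict maximum of $w$, and the supersolution test with a constant there yields $0\ge\eta>0$. Your (ii) also has a gap in the degenerate case $Dw(x_0)=0$: the tilting perturbation does not ensure the new touching point escapes the (possibly large) set where the tilted gradient vanishes, and the paper's proof genuinely relies on $u^\eta$ to avoid this. Finally, your Perron construction in (i) is incomplete: without comparison for the degenerate equation, the standard step $\bar u^*=\bar u_*$ is not available, whereas the vanishing-viscosity limit comes with the interior H\"older estimates of Theorem~\ref{PTreg} and comparison at each level $\eta>0$.
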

%%%%%

Hereafter in this paper, we call $\underline u, \bar u$ (bounded in $\Omega$) \textsl{the minimal and maximal} functions for $\mathcal{H}(s,g,\Omega)$. The maximal solution is constructed letting $\eta \uparrow 0$, in the sequence of solutions $u^\eta$ for the problem \eqref{eq}, with $f=-\eta$. The Minimal solution is obtained in an analogous way. Item (v) is a purely nonlocal phenomena. As it can be observed in~\cite{DSV}, $s$-harmonic functions may present strict local extrema, in contrast with the second-order case, in which the strong maximum (minimum) principle holds. In relation to (iv), it is easy to see that the converse is not true: for $g\equiv constant$, we have $g \in \mathcal{P}(\Omega)$, but $|Dw|= 0$ in $\mathbb{R}^N$.

The results in Theorem \ref{teomaxmin} shall be presented in a more general fully nonlinear version, see Theorem~\ref{teomaxmingeral}. Furthermore, we believe that these results can also be extended to other fractional quasilinear models that share some structural similarities, such as the nonvariational fractional $p$-Laplacian
$$
\Delta_p^s u(x) = C_{N, s} \int_{\R^N} [u(x + j_p(Du(x))z) - u(x) - \chi_B(z) j_p(Du(x)) \cdot z] \frac{dz}{|z|^{N + 2s}},
$$
where $\chi$ denotes the indicator function and $j_p$ the jump function  
\begin{equation}
j_p(q) =
\left\{
\begin{array}{ccl}
 |q|^{\frac{p-2}{2}}(I_N + r_p \hat q \otimes \hat q) & \mbox{for} & q \neq 0,\\[0.2cm]
 0 & \mbox{for} & q=0,
\end{array}
\right.
\end{equation}
for $p>2$, where $\hat q = q/|q|$, $I_N$ is the identity $N \times N$ matrix, and $r_p > 0$ is a normalizing constant. This operator is of particular interest since $\Delta_p^s u \to \Delta_p u$ as $s \uparrow 1$, where $\Delta_p$ is the second-order $p$-Laplace operator, see~\cite{ChJ}. Nevertheless, we do not know if they can be applied to the variational version of the fractional $p$-Laplacian, which naturally arises in the study of critical points of energy functionals defined on fractional Sobolev spaces $W^{s,p}$. Considering this type of operators, the equivalence among weak (distributional) and viscosity solutions of homogeneous equations is proven in~\cite{BM}, as a nonlocal counterpart of the analogous result for the second-order $p$-Laplace operator provided in~\cite{JLM}.

We also expect that the methods used in the proof of Theorem \ref{teomaxmin} might be applied to singular (rather than degenerate) quasilinear nonlocal operators as the one presented in~\cite{BCF}, associated to fractional ``tug-of-war" games (whose local limit is the normalized $p$-Laplacian). In fact, the existence of the extremal solutions for these operators (in the sense of Theorem~\ref{teomaxmin}) was already mentioned in the survey paper~\cite{C}.

We stress that, for general source term $f$, we do not know if the solution to~\eqref{eq} is unique, even if the associated homogeneous problem has unique solution. If $f$ does not vanish in $\Omega$, problem~\eqref{eq} has a unique solution (for general admissible $g$), see Theorem~\ref{teo1} below. 

We conclude the introduction by pointing out some open questions that emerged during the preparation of this work. The main problem is related to interior $C^{1, \alpha}$ estimates for solutions of~\eqref{eq}, for general exterior data $g$. Even for the homogeneous case, this seems to require new ideas, and this would be of great benefit to deal with the non-homogeneous problem. We also think that a classification result for exterior data $g$ belonging to $\mathcal{P}(\Omega)$ is of great interest. This would led to a better understanding of the problem, as well as identifying new classes of solutions, different to extremal and $s$-harmonic functions.

\smallskip

The paper is organized as follows. In Section \ref{prelim}, we establish the general context that we treat in this paper as definition and basic results, comparison principle, and strong maximum principle. In Section \ref{secinh}, we discuss existence, uniqueness, and Holder regularity for the Dirichlet problem related to equation \eqref{eq}. In Section \ref{homsec}, we prove Theorem \ref{teomaxmin}. In Section \ref{sec-examples}, we explore examples that present uniqueness and non-uniqueness for the homogeneous equation. In the last Section, we prove the Theorem \ref{teoreg}.

%%%%%%%%
\section{Preliminaries }\label{prelim}

Throughout this paper, $\Omega$ shall be a fixed bounded domain in $\mathbb{R}^N$ with $C^2$ boundary, the source term $f$ is assumed continuous and bounded in $\Omega$. For $s \in (0, 1)$, we denote $\mathcal L_0$ the family of symmetric kernels $K: \R^N \setminus \{ 0 \} \to \R_+$ satisfying 
\begin{equation}\label{elliptic}
\lambda |z|^{-(N + 2s)} \leq K(z) \leq \Lambda |z|^{-(N + 2s)}, \quad z \neq 0
\end{equation}
for certain constants $0 < \lambda \leq \Lambda < + \infty$. Given $K \in \mathcal L_0$, $u: \R^N \to \R$ measurable and $x \in \R^N$, we consider linear operators $L_K$ with the form
\begin{align*}
L_K u(x) = & \mathrm{P.V.} \int_{\R^N} [u(y) - u(x)] K (x - y) dy \\
= & \mathrm{P.V.} \int_{\R^N} [u(x + z) - u(x)] K (z) dz,
\end{align*}
which is well defined for functions $u \in C^{1,1}(B_\delta(x)) \cap L^1_{2s}(\R^N)$ for some $\delta > 0$. For this, we shall assume the two-parametric family of kernels $\{ K_{ij} \}_{i \in \mathcal I, j \in \mathcal J} \subset \mathcal L_0$ as the class of nonlocal operators 
\begin{equation}\label{operator}
I u(x) = \inf_{i \in \I} \sup_{j \in \J} L_{ij} u(x),
\end{equation}
where we have adopted the notation $L_{ij} = L_{K_{ij}}$ for simplicity. In addition, the Pucci extremal operators associated to the family are given by
$$
\mathcal M^+ u(x) = \sup_{K \in \mathcal L_0} L_{ij} u(x); \quad \mathcal M^- u(x) = \inf_{K \in \mathcal L_0} L_{ij} u(x).
$$

We are interested in Dirichlet problems of the form
\begin{equation}\label{eq1}
\left \{ \begin{array}{rll} |Du|^\gamma I u & = f \quad & \mbox{in} \ \Omega, \\[0.15cm] u & = g \quad & \mbox{in} \ \mathbb{R}^N\setminus \Omega. \end{array} \right .
\end{equation}

Next, according to~\cite{BI, CS}, we define the notion of viscosity solutions for equations as in \eqref{eq1}. Concerning Dirichlet problems involving a prescribed exterior condition in $\mathbb{R}^N\setminus \Omega$, we refer to~\cite{BChI1} for a notion of solution. For this, we denote $\mathrm{USC}$ (resp. $ \mathrm{LSC}$) for upper semicontinuous functions (resp. lower semicontinuous function). 

\begin{definition}
A function  $u \in \mathrm{USC}(\Omega) \cap L^1_{2s}(\R^N)$ (resp. $u \in \mathrm{LSC}(\Omega) \cap L^1_{2s}(\R^N)$) is a viscosity subsolution (resp. supersolution) to
\begin{equation}\label{tamimi}
|Du|^\gamma Iu = f
\end{equation}
at $x_0 \in \Omega$ if for each $\delta > 0$ and each smooth, bounded function $\varphi$ such that $u - \varphi$ attains a maximum (resp. minimum) in $B_\delta(x_0)$ at $x_0$, then
\begin{equation}\label{eqdef}
|D\varphi(x_0)|^\gamma \inf_{i \in \I} \sup_{j \in \J}(L_{ij}[B_\delta] \varphi(x_0) + L_{ij}[B_\delta^c] u(x_0)) \geq \ (\mbox{resp.} \ \leq) \ f(x_0),
\end{equation}
where, for measurable $E \subseteq \R^N, x \in \R^N$, $\phi: \R^N \to \R$ measurable, and $K \in \mathcal L_0$ we have adopted the notation 
\begin{align*}
L_{K}[E] \phi(x) & = \mathrm{P.V.} \int_{E} [\phi(x + z) - \varphi(x)] K(z)dz,
\end{align*}
whenever the integral makes sense.

A given function $u \in C(\Omega) \cap L_{2s}^1(\R^N)$ is a viscosity solution to~\eqref{eqdef} if $u$ is a viscosity sub and supersolution simultaneously.
\end{definition}

For the reader's convenience, we state the ultimate local regularity result available for viscosity solutions of \eqref{eq1}, in which one shall be useful for the purposes of Section \ref{secreg}.

\begin{theorem}\label{Local Holder estimates}[Local H\"older estimates \cite{DPT}]\label{PTreg} Assume $f \in C(\Omega) \cap L^\infty(\Omega)$ and  $g \in C(\mathbb{R}^N\setminus \Omega) \cap L_{2s}^1(\mathbb{R}^N\setminus \Omega)$. Then, viscosity solutions $u \in C(\R^N)$ for~\eqref{eq1} are locally $C^{\mu}(\Omega)$ for universal $\mu \in (0,1]$, such that: $\mu=2s$, in case $s < 1/2$; $\mu$ is any positive number strictly less than $1$, for $s=1/2$; $\mu=1$, in case $s > 1/2$.
\end{theorem}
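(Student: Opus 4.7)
The plan is to establish the Hölder estimates via an Ishii--Lions doubling-of-variables technique, adapted to the degenerate nonlocal setting and in the spirit of the approach used for uniformly elliptic nonlocal equations. Fix $x_0 \in \Omega$ with $B_{2r}(x_0) \subset \Omega$; after translation/scaling and a reduction using comparison with barriers built from $f \in L^\infty$ and $g \in L^1_{2s}(\mathbb{R}^N \setminus \Omega)$, reduce to working on $B_1 \times B_1$ with $\|u\|_{L^\infty(\mathbb{R}^N)} \leq 1$. The goal is to exhibit a universal constant $L$ for which
\[
u(x) - u(y) \leq L\omega(|x-y|) + M(|x|^2 + |y|^2) \quad \text{for all } x,y \in B_1,
\]
with the modulus chosen according to $s$: $\omega(t) = t^{2s}$ if $s < 1/2$, $\omega(t) = t^{1-\varepsilon}$ if $s = 1/2$, and $\omega(t) = t - c_0 t^{1+\tau}$ (a concave $C^{1,1}$ perturbation of the identity) if $s > 1/2$.

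Argue by contradiction: assume the doubling functional $\Phi(x,y) := u(x) - u(y) - L\omega(|x-y|) - M(|x|^2 + |y|^2)$ attains a strictly positive maximum at some $(\bar x, \bar y)$ with $\bar x \neq \bar y$. Standard viscosity arguments deliver smooth test functions $\varphi, \psi$ touching $u$ respectively from above at $\bar x$ and from below at $\bar y$, whose gradients are $D\varphi(\bar x) = L\omega'(|\bar x - \bar y|)\hat e + 2M\bar x$ and $D\psi(\bar y) = L\omega'(|\bar x - \bar y|)\hat e - 2M\bar y$, where $\hat e = (\bar x - \bar y)/|\bar x - \bar y|$. Taking $L$ large and $M = L^{1/2}$ say, the test gradients are bounded below by $\delta_0 := \tfrac12 L\omega'(|\bar x - \bar y|) > 0$. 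This is the step that \emph{sidesteps the degeneracy}: at the maximizing pair, the gradients of the test functions are automatically far from zero, so the equation can be applied in the non-degenerate form and yields
\[
I\varphi(\bar x) - I\psi(\bar y) \geq -2\|f\|_\infty / \delta_0^\gamma.
\]

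Next, extract from the maximality of $\Phi$ an upper bound on the nonlocal difference. Writing $I$ as an $\inf$--$\sup$ over kernels in $\mathcal L_0$ and splitting the integrals into small jumps $|z| \leq r_0$ and large jumps, the small-jump contribution carries the (strict) concavity of $\omega$ together with the singularity of $K(z) \sim |z|^{-(N+2s)}$; a direct computation gives
\[
I\varphi(\bar x) - I\psi(\bar y) \leq -c L |\bar x - \bar y|^{\mu - 2s} + \text{error},
\]
where $\mu$ is the target Hölder exponent and the error comes from $M|x|^2$ and from the large-jump tail (controlled by $\|u\|_{L^\infty}$ and the $L^1_{2s}$-norm of $g$). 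The three regimes for $\mu$ correspond exactly to whether the leading power $\mu - 2s$ places the dominant term below, at, or above Lipschitz scaling, reflecting the integrability of $\omega''(|z|)$ against the singular kernel. Choosing $L$ large enough, the lower bound from the PDE and the upper bound from the concavity become incompatible, producing the contradiction.

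The main obstacle is twofold: first, designing the modulus $\omega$ so that the small-jump nonlocal estimate produces a truly negative leading term in each of the three $s$-regimes (the borderline $s = 1/2$ case is the most delicate and is what forces the loss of $\varepsilon$); second, ensuring that $L$ can be taken \emph{universal}, i.e.\ independent of $u$ beyond $\|u\|_{L^\infty(\mathbb{R}^N)}$ and $\|f\|_\infty$. The quasilinear degeneracy enters only through the factor $\delta_0^{-\gamma}$ in the PDE lower bound, and since $\delta_0$ itself is a power of $L$, the competition between the two sides still allows $L$ to be absorbed, which is ultimately why the \emph{Hölder} thresholds are identical to the uniformly elliptic ones, with the price of degeneracy appearing only at the $C^{1,\alpha}$ level (Theorem \ref{teoregintro}).
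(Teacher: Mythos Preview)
The paper does not prove this theorem: it is stated with an explicit attribution to \cite{DPT} and is invoked only as a tool (in the proof of Theorem~\ref{teo1} and in Section~\ref{secreg}). There is therefore no ``paper's own proof'' to compare your proposal against.

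That said, your outline is the correct strategy and matches the approach of \cite{DPT} (itself an adaptation of the Ishii--Lions method for nonlocal operators in \cite{BChI}). The decisive observation you single out---that at the maximum of the doubling functional the test gradients $L\omega'(|\bar x-\bar y|)\hat e\pm 2M(\cdot)$ are automatically bounded away from zero, so the degeneracy $|Du|^\gamma$ can be divided out---is exactly the mechanism that makes the degenerate problem inherit the same H\"older thresholds as the uniformly elliptic one. Your trichotomy of moduli ($t^{2s}$, $t^{1-\varepsilon}$, $t-c_0t^{1+\tau}$) is the standard one for the three regimes of $s$.

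Two points to tighten if you flesh this out. First, the reduction to $\|u\|_{L^\infty(\R^N)}\le 1$ is not immediate: $u=g$ in $\R^N\setminus\Omega$ with $g$ only in $L^1_{2s}$, so you must either truncate the exterior data and absorb the tail into the large-jump error, or keep a weighted $L^1_{2s}$ norm in the constants throughout. Second, the closing balance ``$\delta_0$ itself is a power of $L$'' is regime-dependent: for $s<1/2$ one has $\delta_0\sim L|\bar x-\bar y|^{2s-1}$, which blows up near the diagonal, so the competition between $-cL|\bar x-\bar y|^{\mu-2s}$ and $\|f\|_\infty\delta_0^{-\gamma}$ must be checked with the explicit powers of $|\bar x-\bar y|$ in hand, not just of $L$. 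Both issues are routine but should be made explicit.
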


\section{The inhomogeneous problem}

\subsection{Comparison Principle}

We now deliver a proof for comparison principle for the Dirichlet problem \eqref{eq1} under the assumption that $f$ is strictly away from zero. We carry out the details for the reader’s convenience.

\begin{proposition}\label{comparison}
Assume $\inf_{\Omega} | f | > 0$. Let $u,v$ be respectively viscosity subsolution and supersolution to the equation~\eqref{tamimi} such that $u \leq v$ in $\mathbb{R}^N\setminus \Omega$. Then, $u \leq v$ in $\Omega$.
\end{proposition}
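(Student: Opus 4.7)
The plan is to argue by contradiction via a classical doubling-of-variables procedure adapted to nonlocal degenerate equations (in the spirit of Barles--Imbert), using the strict sign of $f$ to neutralize the gradient degeneracy. Without loss of generality assume $f \geq \eta > 0$ in $\Omega$; the case $f \leq -\eta < 0$ reduces to this by passing to $-v,\,-u,\,-f$, which preserves the structural class via the symmetry of the $\inf\sup$ formulation. Assume for contradiction that $M := \max_{\overline{\Omega}}(u-v) > 0$. Since $u \leq v$ on $\mathbb{R}^N\setminus\Omega$ and $u,v$ are respectively upper and lower semicontinuous, the maximum is attained at some interior point $x^\star\in\Omega$.

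Double the variables via the quadratic penalty
$$\Phi_\epsilon(x,y) := u(x) - v(y) - \frac{|x-y|^2}{\epsilon^2},$$
and let $(x_\epsilon, y_\epsilon) \in \overline{\Omega}\times\overline{\Omega}$ be a maximizer, with value $M_\epsilon \geq M > 0$. Standard arguments yield $x_\epsilon, y_\epsilon \to x^\star$, $|x_\epsilon - y_\epsilon|^2/\epsilon^2 \to 0$, and both points lie in $\Omega$ for $\epsilon$ small. Denote by $p_\epsilon := 2(x_\epsilon - y_\epsilon)/\epsilon^2$ the common gradient of the admissible test functions $\varphi_1(x) := v(y_\epsilon) + |x - y_\epsilon|^2/\epsilon^2$ for $u$ at $x_\epsilon$, and $\varphi_2(y) := u(x_\epsilon) - |x_\epsilon - y|^2/\epsilon^2$ for $v$ at $y_\epsilon$.

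The crux is a dichotomy on $p_\epsilon$. \emph{If $p_\epsilon = 0$} (equivalently $x_\epsilon = y_\epsilon$) for some $\epsilon$, the subsolution inequality at $x_\epsilon$ with test $\varphi_1$ collapses to
$$0 \;=\; |D\varphi_1(x_\epsilon)|^\gamma \cdot \inf_i \sup_j(\cdots) \;\geq\; f(x_\epsilon) \;\geq\; \eta \;>\; 0,$$
an immediate contradiction. \emph{If $p_\epsilon \neq 0$} for all small $\epsilon$, divide both viscosity inequalities by $|p_\epsilon|^\gamma > 0$ and subtract using standard near-optimal selections in the $\inf\sup$. The maximality of $\Phi_\epsilon$ forces the integrand $[u(x_\epsilon + z) - v(y_\epsilon + z)] - [u(x_\epsilon) - v(y_\epsilon)]$ to be non-positive whenever both arguments land in $\overline{\Omega}$, with the remainder controlled through the exterior datum $g$ (using $u = v = g$ outside $\Omega$ and $g \in L^1_{2s}$), while the local contributions $L_{ij}[B_\delta]\varphi_i$ are of order $O(\delta^{2-2s}/\epsilon^2)$ from the explicit Hessians $\pm 2I_N/\epsilon^2$. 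The strict lower bound $f \geq \eta$ then enters through a strict subsolution perturbation of $u$: replacing $u$ by $u - \rho \xi$ for a smooth bump $\xi \geq 0$ supported in $\Omega$, one obtains a strict subsolution (with right-hand side $\geq \eta/2$) that rules out the doubling maximum from staying positive uniformly in $\epsilon$ once $\rho$ is fixed small, closing the contradiction.

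The main obstacle I anticipate is the degenerate weight $|p_\epsilon|^\gamma$ combined with the $\inf\sup$ structure of $I$: a direct subtraction of viscosity inequalities at the doubling maximum only yields $(f(x_\epsilon) - f(y_\epsilon))/|p_\epsilon|^\gamma \leq o_\delta(1) + C\delta^{2-2s}/\epsilon^2$, both sides of which vanish as $\epsilon,\delta \to 0$ purely by continuity of $f$, producing no contradiction in the degenerate regime where $|p_\epsilon|$ may be very small or very large. The strict-sign hypothesis is precisely the ingredient that bridges this gap: it makes the $p_\epsilon = 0$ branch instantly contradictory and, via the strict subsolution perturbation, upgrades the weak difference in the $p_\epsilon \neq 0$ branch to a genuine quantitative gap, from which the assumption $\max(u-v) > 0$ collapses.
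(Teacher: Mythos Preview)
Your diagnosis of the difficulty is exactly right: after doubling variables and subtracting, the right-hand side is $f(x_\epsilon)-f(y_\epsilon)$, which vanishes by continuity, and the degenerate prefactor $|p_\epsilon|^\gamma$ could go to zero, so no contradiction emerges. Your treatment of the branch $p_\epsilon=0$ is also correct. The gap is in your proposed remedy for the $p_\epsilon\neq 0$ branch.

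The perturbation $u\mapsto u-\rho\xi$ with a nonnegative bump $\xi$ does \emph{not} produce the strict gap you claim. At best, $u-\rho\xi$ is a viscosity subsolution with right-hand side $\geq f - C\rho$ (the bump perturbs both the gradient weight and the nonlocal term by $O(\rho)$, with no definite sign), so after subtracting the supersolution inequality for $v$ you still obtain a right-hand side of the form $f(x_\epsilon)-f(y_\epsilon)-C\rho$, which is \emph{negative} in the limit, not positive. Nothing here forces the left-hand nonlocal terms to be nonnegative, so the argument does not close. Your final paragraph in fact admits as much; the sentence ``rules out the doubling maximum from staying positive uniformly in $\epsilon$'' is not a mechanism, it is a hope.

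The missing idea, and the one the paper uses, exploits the positive $(1+\gamma)$-homogeneity of the operator: replace $u$ by $\tilde u:=\mu u$ with $\mu>1$ close to $1$. Then $\tilde u$ is a viscosity subsolution of $|D\tilde u|^\gamma I\tilde u\geq \mu^{1+\gamma}f$, and after doubling variables and subtracting one obtains on the right-hand side
\[
\mu^{1+\gamma}f(\bar x)-f(\bar y)\;\longrightarrow\;(\mu^{1+\gamma}-1)\,f(x_0)\;\geq\;(\mu^{1+\gamma}-1)\,c_f\;>\;0
\]
as $\epsilon\to 0$. This strict positivity (i) forces $\bar p\neq 0$ automatically, and (ii) allows one to divide through by $|\bar p|^\gamma$ and conclude that the nonlocal difference $L_{ij}[B_\delta^c]\tilde u(\bar x)-L_{ij}[B_\delta^c]v(\bar y)$ is nonnegative up to an $o_\delta(1)$ error. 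One then decomposes this difference over the four regions determined by whether $\bar x+z$ and $\bar y+z$ lie in $\Omega$ or not; maximality of $\Phi$ kills the interior piece, the mixed pieces are $o_\epsilon(1)$ by a measure argument, and the exterior piece is bounded above by $-\lambda M\int_{(\Omega-x_0)^c}|z|^{-(N+2s)}\,dz + C(\mu-1)$. Sending $\delta,\epsilon\to 0$ and then $\mu\to 1$ yields $0\leq -\lambda M\cdot(\text{positive})$, the contradiction. The multiplicative scaling is precisely the device that converts the \emph{sign} hypothesis $\inf_\Omega|f|>0$ into a \emph{quantitative} gap surviving the subtraction; an additive bump cannot do this.

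A minor side remark: you write ``using $u=v=g$ outside $\Omega$'', but the hypothesis is only $u\leq v$ in $\mathbb{R}^N\setminus\Omega$; this inequality (not equality) is what makes the exterior integral contribute the term $-\lambda M\int(\cdots)$ with the correct sign.
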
 

\begin{proof}
With no loss of generality, we assume $\inf_{\Omega}  f  =: c_f > 0$. Let us suppose, for the purpose of contradiction, that 
$$
M := \sup_{\Omega}\{ u - v \} > 0.
$$
By the upper semicontinuity of $u-v$, we have the existence of $x_0 \in \Omega$ such that $M = (u - v)(x_0)$. For each $\mu >1$ sufficiently close to 1, we have $M_\mu := \sup_{\Omega}\{ \mu u - v \} > 0$, and the supremum is attained at some point to $x_\mu \in \Omega$. We have that any of these points attaining $M_\mu$ is uniformly away from the boundary. In fact, if $x_\mu \to \partial \Omega$ as $\mu \to 1$, since $M_\mu \to M$, we would have $M \leq 0$, which is a contradiction. We denote by $\rho \in (0,1)$ this distance, which can be chosen independent of $\mu$ when this parameter is close to $1$.

Notice that the function $x \mapsto \mu u(x)$ satisfies
$$
|Du|^\gamma I u \geq \mu^{\gamma + 1} f \quad \mbox{in} \ \Omega,
$$
in the viscosity sense. 

For $\epsilon \in (0,1)$, we double variables and consider the function
\begin{equation}\label{doubling}
\Phi(x,y) := \tilde u(x) - v(y) - \frac{1}{2\epsilon^{2}}|x - y|^2, \quad x, y \in \Omega.
\end{equation}
for $\tilde u = \mu u$.
Function $\Phi$ is upper semicontinuous in $\bar \Omega \times \bar \Omega$ and attains its maximum at some point $(\bar x, \bar y)$. Using the inequality $\Phi(\bar x, \bar y) \geq \Phi(x_\mu, x_\mu)$, and by standard arguments, we see that
$$
\epsilon^{-2} |\bar x - \bar y|^2 \to 0 \quad \mbox{and so,} \quad  \bar x, \bar y \to \tilde x_\mu, \quad \mbox{as} \ \epsilon \to 0,
$$
where $\tilde x_\mu \in \Omega$ is such that $(\tilde u - v)(\tilde x_\mu) = M_\mu$.

Then, denoting $\phi(x,y) = \frac{1}{2\epsilon^{2}}|x - y|^2$ and $\bar p = 2(\bar x - \bar y)/\epsilon^2$, we use the viscosity inequalities for $\mu u$ and $v$, for each $\delta > 0$ and $a \in (0,1)$, there exist $i \in \mathcal I, j \in \mathcal J$ such that
\begin{equation}\label{testing}
\begin{array}{rcl}
|\bar p|^\gamma \,(\,L_{ij}[B_\delta] \phi(\cdot, \bar y) (\bar x) + L_{ij}[B_\delta^c] \tilde u(\bar x)) & \geq & \mu^{1 + \gamma} f(\bar x) - a/2, \\[0.2cm] 
|\bar p|^\gamma (-L_{ij}[B_\delta] \phi(\bar x, \cdot) (\bar y) + L_{ij}[B_\delta^c] v(\bar y))  & \leq & f(\bar y) + a/2.
\end{array}
\end{equation}

By the assumption on $f$ and since $\mu$ is away zero, we can choose $a$ small enough in order $\mu^{1 + \gamma} f(\bar x) - a/2 > 0$ for all $\epsilon$. This implies that $\bar p \neq 0$ for all $\epsilon$. We also have that 
$$
|L_{ij}[B_\delta] \phi(\cdot, \bar y) (\bar x)|, |L_{ij}[B_\delta] \phi(\bar x, \cdot) (\bar y)| \leq C \epsilon^{-2} \delta^{2 - 2s},
$$
where $C > 0$ depends only on the ellipticity constants, $N$ and $s$.

Subtracting the viscosity inequalities and using the continuity of $f$, we get that
\begin{equation}\nonumber
\begin{array}{c}
|\bar p|^\gamma \Big{(} C\epsilon^{-2} \delta^{2 - 2s} + L_{ij}[B_\delta^c] \tilde u(\bar x) - L_{ij}[B_\delta^c] v(\bar y) \Big{)} 
\\[0.25cm] \geq (\mu^{1 + \gamma} - 1) c_f - a - m_f(|\bar x - \bar y|), 
\end{array}
\end{equation}
where $m_f$ is the modulus of continuity of $f$ in $\bar \Omega$, from which we derive 
$$
m_f(|\bar x - \bar y|) \to 0 \quad \mbox{as} \quad \epsilon \to 0.
$$ 
Then, fixing $a$ and $\epsilon$ small in order that the right-hand side of the last inequality is nonnegative, and canceling $|\bar p|$ we get that
\begin{equation}\label{esquema}
0 \leq \epsilon^{-2} O(\delta^{2 - 2s}) + L_{ij}[B_\delta^c] \tilde u(\bar x) - L_{ij}[B_\delta^c] v(\bar y).
\end{equation}

Now we see that
\begin{align*}
L_{ij}[B_\delta^c] \tilde u(\bar x) - L_{ij}[B_\delta^c] v(\bar y) = I_1 + I_2 + I_3 + I_4,
\end{align*}
where
\begin{align*}
I_1 & =  \int_{(\Omega - \bar x) \cap (\Omega - \bar y)} [\tilde u(\bar x + z) - v(\bar y + z) - (\tilde u(\bar x) - v(\bar y))]K_{ij}(z)dz, \\
I_2 & =  \int_{(\Omega - \bar x)^c \cap (\Omega - \bar y)} [\tilde u(\bar x + z) - v(\bar y + z) - (\tilde u(\bar x) - v(\bar y))]K_{ij}(z)dz, \\
I_3 & =  \int_{(\Omega - \bar x) \cap (\Omega - \bar y)^c} [\tilde u(\bar x + z) - v(\bar y + z) - (\tilde u(\bar x) - v(\bar y))]K_{ij}(z)dz, \\
I_4 & =  \int_{(\Omega - \bar x)^c \cap (\Omega - \bar y)^c} [\tilde u(\bar x + z) - v(\bar y + z) - (\tilde u(\bar x) - v(\bar y))]K_{ij}(z)dz.
\end{align*}
Using the maximality of $(\bar x, \bar y)$ in~\eqref{doubling}, we readily have 
$$
I_1 \leq 0.
$$
The estimates for $I_2$ and $I_3$ are similar, so we provide the details for the former. Using that $\Omega$ is bounded and that $u,v$ are locally bounded, there exists $R > 0$ large but independent of $\mu$ and $\epsilon$ such that
\begin{align*}
|I_2| \leq C (\| u\|_{L^\infty(B_R)} + \| v \|_{L^\infty(B_R)}) \int_{(\Omega - \bar x)^c \cap (\Omega - \bar y)} K_{ij}(z)dz,
\end{align*}
and using that $\bar x, \bar y$ are uniformly away from the boundary, there exists a constant $C_\rho > 0$ not depending on $\mu$ nor $\epsilon$ such that
\begin{equation}
\begin{array}{rcl}
|I_2| & \leq & C C_\rho (\| u\|_{L^\infty(B_R)} + \| v \|_{L^\infty(B_R)}) \int_{(\Omega - \bar x)^c \cap (\Omega - \bar y)} dz \\[0.3cm]
& = & C C_\rho (\| u\|_{L^\infty(B_R)} + \| v \|_{L^\infty(B_R)}) |(\Omega - \bar x)^c \cap (\Omega - \bar y)|,
\end{array}
\end{equation}
and so, since $|\bar x - \bar y| \to 0$ as $\epsilon \to 0$, we conclude that
$$
\max\{|I_2|, |I_3|\} = o_\epsilon(1),
$$
where $o_\epsilon(1) \to 0$ uniform on $\mu$. Finally, using that $u \in L^1_{2s}(\R^N)$, the fact that $\bar x, \bar y$ are away from the boundary, and that $u \leq v$ in $\mathbb{R}^N\setminus \Omega$, we arrive at
\begin{align*}
I_4 & \leq -M \int_{(\Omega - \bar x)^c \cap (\Omega - \bar y)^c} K_{i,j}(z)dz + C_\rho (1 - \mu) \\
& \leq -M \lambda \int_{(\Omega - \bar x)^c \cap (\Omega - \bar y)^c} |z|^{-(N + 2s)} dz + C_\rho (1 - \mu),
\end{align*}
where $C_\rho$ depends on $L^1_{2s}$ and $L^\infty_{loc}$ estimates of $u$, but not on $\epsilon$ nor $\mu$. Joining the above estimates into~\eqref{esquema}, we conclude that
\begin{align*}
0 \leq \epsilon^{-2} O(\delta^{2 - 2s}) - M \lambda \int_{(\Omega - \bar x)^c \cap (\Omega - \bar y)^c} |z|^{-(N + 2s)} dz + C_\rho (1 - \mu) + o_\epsilon(1),
\end{align*}
at this point, we take $\delta \to 0, \epsilon \to 0$ and $\mu \to 1$ to conclude
$$
0 \leq -\lambda M \int_{(\Omega - x_0)^c} |z|^{-(N + 2s)}dz,
$$
where $x_0 \in \Omega$ is such that $(u - v)(x_0) = M$, from which we arrive at a contradiction with the fact that $M> 0$. 
\end{proof}

%%%%%%%%
\subsection{Existence of viscosity solutions}\label{secinh}

Next, we prove the main result of this section. We sate existence of viscosity solutions for the Dirichlet problem \eqref{eq1} and $L^\infty$ estimates. Assuming, in addition, nondegeneracy condition for the source term $f$, we also derive uniqueness of solutions.  
 
\begin{theorem}\label{teo1}
Let $f \in C(\Omega) \cap L^\infty(\Omega)$ and  $g \in C(\mathbb{R}^N\setminus \Omega) \cap L_{2s}^1(\mathbb{R}^N\setminus \Omega)$. The following statements hold:

\medskip

\begin{enumerate}
\item[(i)] (Existence) There exists viscosity solution $u \in C(\R^N)$ for problem~\eqref{eq1}.

\medskip

\item[(ii)] ($L^\infty$-bounds) Given $u \in C(\R^N)$ the solution to \eqref{eq1}, there holds
$$
\| u \|_{L^\infty(\Omega)} \leq C (1 +  \| f \|_\infty + \| g\|_{L^\infty(\partial \Omega)} + \| g \|_{L^1_{2s}(\mathbb{R}^N\setminus \Omega)}),
$$
for some $C>0$ depending only on $N, s, \gamma$ and $\Omega$.

\medskip

\item[(iii)] (Uniqueness) Assuming $\inf_{\Omega} |f|  > 0$, problem ~\eqref{eq1} has unique solution.

\end{enumerate}
\end{theorem}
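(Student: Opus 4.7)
The plan is to address the three items in the order (iii), (ii), (i). Uniqueness in item (iii) is immediate: Proposition \ref{comparison} applied to the pairs $(u_1,u_2)$ and $(u_2,u_1)$—permissible because $\inf_\Omega |f|>0$—gives $u_1\equiv u_2$ in $\overline\Omega$ whenever $u_1,u_2$ are two viscosity solutions of \eqref{eq1} with the same exterior datum $g$.

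For the $L^\infty$ estimate in (ii), I would construct explicit global barriers. Fix $x_* \in \R^N\setminus \overline\Omega$ and radii $R_1<R_2$ with $\Omega \subset B_{R_1}(x_*)$, and seek a supersolution of the form $V(x) = A + B\,\psi(x)$, where $\psi \in C^\infty_c(B_{R_2}(x_*))$ is strictly positive on $\overline\Omega$ and engineered so that $|D\psi| \geq c_0 > 0$ and $\mathcal M^+ \psi \leq -c_1 < 0$ on $\Omega$. Since $I\psi \leq \mathcal M^+\psi$, this yields
$$
|DV|^\gamma\, IV \;\leq\; -B^{1+\gamma} c_0^\gamma c_1 \;\leq\; -\|f\|_\infty-1
$$
once $B$ is of order $(1+\|f\|_\infty)^{1/(1+\gamma)}$. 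The constant $A$ is then chosen so that $V \geq g$ on $\R^N\setminus\Omega$: near $\partial\Omega$ this rests on $\|g\|_{L^\infty(\partial\Omega)}$ and continuity of $g$, while the far-field tail integrals appearing in $IV$ are dominated by $\|g\|_{L^1_{2s}}$. Since Proposition \ref{comparison} needs a non-degenerate source, I would first compare $V$ with the solution of the $\eta$-perturbed problem $|Du|^\gamma Iu = f-\eta$, extract a bound uniform in $\eta$, and let $\eta\to 0$. A symmetric construction using $\mathcal M^-$ (replacing $\psi$ by $-\psi$, whose nonlocal ellipticity flips sign) produces a subsolution $\tilde V$ with $\tilde V \leq g$ on $\R^N\setminus\Omega$ and $|D\tilde V|^\gamma I\tilde V \geq \|f\|_\infty+1$, yielding the lower bound.

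Existence in (i) is then obtained by Perron's method. Setting
$$
\mathcal S := \bigl\{ v \in \mathrm{USC}(\R^N)\cap L^1_{2s}(\R^N) : v \text{ is a viscosity subsolution of \eqref{eq1} in }\Omega,\ v \leq g \text{ on } \R^N\setminus\Omega \bigr\},
$$
the subsolution $\tilde V$ from (ii) belongs to $\mathcal S$ and the supersolution $V$ from (ii) caps $\mathcal S$ pointwise, so $u := \sup_{v \in \mathcal S} v$ is well defined and locally bounded. Standard viscosity-solution machinery shows that the upper semicontinuous envelope $u^*$ remains a subsolution (stability under pointwise supremum) and that $u_*$ is a supersolution (otherwise the bump-function lemma would contradict maximality of $u$). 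Continuity up to $\partial\Omega$ comes from localized boundary barriers built via the exterior-ball condition granted by the $C^2$ regularity of $\partial\Omega$, combined with the global control of (ii); these force $u^*=u_*=g$ continuously on $\partial\Omega$, yielding $u \in C(\R^N)$.

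The hard part throughout is the barrier construction in (ii). The degeneracy of $|Du|^\gamma$ rules out classical radial barriers centered inside $\Omega$, whose gradients vanish at the center; centering the profile $\psi$ outside $\Omega$ keeps $|D\psi|$ bounded below on $\overline\Omega$, but one must then simultaneously produce a definite sign of $I\psi$ and reconcile the matching condition $V \geq g$ on $\R^N\setminus\Omega$ with the nonlocal tail contributions to $IV$, requiring a delicate balance between $A$, $B$, and the profile $\psi$. The $\eta$-perturbation needed to invoke Proposition \ref{comparison} is a minor technical nuisance, but uniformity in $\eta$ is what closes the scheme.
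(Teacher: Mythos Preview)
Your treatment of (iii) is correct and identical to the paper's. Your barrier construction for (ii) is also essentially the paper's idea (Lemma~\ref{barrera} builds exactly such a profile $\varphi_M$ centred outside $\Omega$ so that $|D\varphi_M|$ stays away from zero on $\overline\Omega$). However, the $\eta$-perturbation you propose does not do what you want: replacing $f$ by $f-\eta$ does \emph{not} make $\inf_\Omega|f-\eta|>0$ when $f$ changes sign or vanishes, so Proposition~\ref{comparison} still cannot be invoked. The fix is simpler than you suggest: once $V$ satisfies $|DV|^\gamma IV\le -(\|f\|_\infty+1)$ and the given solution $u$ satisfies $|Du|^\gamma Iu=f\ge -(\|f\|_\infty+1)$, both are super/subsolutions of the equation with \emph{constant} right-hand side $-(\|f\|_\infty+1)$, to which Proposition~\ref{comparison} applies directly.

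The genuine gap is in (i). Perron's method produces $u^*$ a subsolution and $u_*$ a supersolution, but to conclude $u^*=u_*$ (hence $u\in C(\R^N)$) you need comparison between arbitrary sub- and supersolutions of~\eqref{eq1}, and Proposition~\ref{comparison} gives this only when $\inf_\Omega|f|>0$. Boundary barriers force $u^*=u_*=g$ on $\partial\Omega$, but this says nothing about interior continuity; your final sentence conflates the two. The paper avoids this obstruction by a different route: it regularises the \emph{gradient} factor, solving $(\epsilon+|Du|^\gamma)Iu=f$ for each $\epsilon>0$. This auxiliary problem is uniformly elliptic, so comparison holds regardless of the sign of $f$ (Appendix~\ref{existenceapx}), and Perron yields a unique continuous $u^\epsilon$. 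The barriers (which are independent of $\epsilon$) give uniform $L^\infty$ bounds, the interior H\"older estimates of Theorem~\ref{PTreg} give equicontinuity, and the solution of~\eqref{eq1} is obtained as a locally uniform limit $u^\epsilon\to u$ as $\epsilon\downarrow 0$. Your scheme would work if you inserted this vanishing-viscosity layer; as written, the continuity of the Perron function is unproven.
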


\begin{lemma}\label{barrera}
For $M > 0$, denote $\varphi_M(x) = (M^2 - |x|^2)_+$. Then, there exists $M \geq 2$ large enough just depending on $N, s$ and the ellipticity constants, and $c_M > 0$ such that  
\begin{equation*}
\mathcal M^+ \varphi_M(x) \leq -c_M \quad \mbox{in} \ B_1.
\end{equation*}
\end{lemma}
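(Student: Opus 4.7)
The plan is to fix any $K\in\mathcal L_0$ and any $x\in B_1$, bound $L_K\varphi_M(x)$ above by a negative constant independent of $K$ and $x$, and then pass to the supremum over $K$. Since each $K$ is even and $\varphi_M$ is Lipschitz and piecewise $C^2$, I rewrite
$$L_K \varphi_M(x) = \frac{1}{2}\int_{\R^N}\bigl[\varphi_M(x+z)+\varphi_M(x-z)-2\varphi_M(x)\bigr]K(z)\,dz,$$
where the integral converges absolutely and no principal value is needed.

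The core of the argument is a pointwise case analysis of the symmetric increment for $M \ge 2$ and $x\in B_1$. If $|z|\le M-1$, then both $x\pm z\in B_M$, so $\varphi_M(x\pm z)=M^2-|x\pm z|^2$, and the parallelogram identity yields an increment exactly equal to $-2|z|^2$. If $|z|>M-1$, I split into three subcases according to which of $x\pm z$ lie outside $B_M$. Using $\varphi_M\le M^2$ together with $\varphi_M(x)\ge M^2-|x|^2\ge M^2-1$ for $x\in B_1$, each subcase produces an increment bounded above by $-M^2+2|x|^2\le -M^2+2\le 0$ as soon as $M\ge 2$.

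With this pointwise bound I discard the (nonpositive) tail $|z|>M-1$ and apply $K(z)\ge \lambda|z|^{-(N+2s)}$ on the remaining set:
$$L_K\varphi_M(x) \le -\int_{|z|\le M-1}|z|^2 K(z)\,dz \le -\lambda\,\omega_{N-1}\,\frac{(M-1)^{2-2s}}{2-2s}=:-c_M,$$
with $\omega_{N-1}$ the surface area of the unit sphere. Since the right-hand side is independent of $K\in\mathcal L_0$ and $x\in B_1$, taking the supremum gives $\mathcal M^+\varphi_M(x)\le -c_M$ in $B_1$, with $c_M>0$ depending only on $N$, $s$, $\lambda$ (and $M$).

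The only delicate step is the sign analysis of the symmetric increment on $\{|z|>M-1\}$: because of the positive-part truncation the increment could in principle be positive, and one has to verify the three case configurations by hand. This is precisely where the threshold $M\ge 2$ enters; notice that enlarging $M$ only improves the constant $c_M$, so the "$M$ large enough" in the statement leaves the authors room for later applications, but no largeness beyond $M\ge 2$ is strictly required to obtain $c_M>0$.
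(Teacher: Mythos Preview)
Your argument is correct and very close to the paper's own proof: both split the integral into a near region where the quadratic formula gives the second difference exactly $-2|z|^2$, show the remaining far region contributes nonpositively, and then bound the near part using $K\ge\lambda|z|^{-(N+2s)}$. Your version is more explicit---you produce a concrete $c_M=\lambda\,\omega_{N-1}(M-1)^{2-2s}/(2-2s)$ and note that $M\ge 2$ already suffices---whereas the paper splits at radius $M-|x|$ and simply invokes concavity without writing out the constant. One minor imprecision in your case analysis: when $|z|>M-1$ it can still happen that \emph{both} $x\pm z$ lie in $B_M$ (take $x=0$ and $|z|$ slightly above $M-1$), and in that subcase the bound $-M^2+2|x|^2$ does not follow from the crude estimate $\varphi_M\le M^2$; however the increment there is again exactly $-2|z|^2<0$ by the same parallelogram identity, so the tail is still nonpositive and the conclusion is unaffected.
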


\begin{proof}
This is a well-known result, but we provide the proof for completeness.
For each $K$ we have
\begin{equation}
\begin{array}{rcr}
L_K \varphi_M(x) & = & \displaystyle \mathrm{P.V.} \int_{B_{M - |x|}(x)} [\varphi_M(y) - \varphi_M(x)] K(x - y)dy \\[0.4cm]
& & + \displaystyle \int_{\mathbb{R}^N \setminus B_{M - |x|}(x)} [\varphi_M(y) - \varphi_M(x)] K(x - y)dy \\[0.4cm]
& \leq & \displaystyle\mathrm{P.V.} \int_{B_{M - |x|}(x)} [\varphi_M(y) - \varphi_M(x)] K(x - y)dy \\[0.4cm]
& = & \displaystyle \frac{1}{2} \int_{B_{M - |x|}(0)} \int_{0}^{1} \langle D^2 \varphi_M(x + tz) z, z \rangle dt K(z)dz,
\end{array}
\end{equation}
and since the function $\varphi_M$ is concave in $B_{M - |x|}(x)$ we conclude the claim. 

\end{proof}

Now we are in a position to provide the main result of this section.

\begin{proof}[Proof of Theorem~\ref{teo1}]
We focus our arguments for proving $(i)$. For each $\epsilon > 0$, we consider the auxiliary problem
\begin{equation}\label{eqeps}
\left \{
\begin{array}{rll} (\epsilon + |Du|^{\gamma}) I u & = f \quad & \mbox{in} \ \Omega, \\ u & = g \quad & \mbox{in} \ \mathbb{R}^N\setminus \Omega, \end{array} \right .
\end{equation}
Since the problem above is uniformly elliptic, it has a unique solution $u^\epsilon \in C(\bar \Omega) \cap C^{1, \alpha}(\Omega)$. For the reader's convenience, we provide a sketch of the proof in Appendix \ref{existenceapx}.  

Next, consider $x_0 \in \mathbb{R}^N\setminus \Omega$ and $M \gg 1$ such that $\mathrm{dist}(x_0, \Omega) = 1$ and $\Omega \subset B_{M/2}(x_0)$. By translation invariance, we may assume that $x_0 = 0$. Denote $d_\Omega = \mathrm{diam}(\Omega) + 1$. 
From Lemma~\ref{barrera}, and scaling properties of the extremal operator, we notice that function
$$
\varphi(x) := \varphi_M(x/d_\Omega)
$$
satisfy
$$
\mathcal M^+ \varphi \leq -c_M d_\Omega^{-2s} \quad \mbox{in } \; \Omega.
$$
Next, for $A > 1$ to be fixed, consider function 
$$
V(x) = A \varphi(x) + \chi_{\mathbb{R}^N \setminus B_{2M/3}}(x) g(x), \quad x \in \R^N.
$$
A simple computation shows that for each $x \in \Omega$ we have
$$
\mathcal M^+ V(x) \leq -A c_M d_\Omega^{-2s} + C\Lambda \| g \|_{L^1_{2s}(B_{2M/3})}, 
$$
for some $C > 0$ just depending on $N, s$. Then, choosing $A$ large enough in terms of $M$ and $g$, we guarantee that $V$ satisfies
$$
\MM^+ V(x) \leq -1 \quad \mbox{in } \; \Omega,
$$
and $V \geq g$ in $\mathbb{R}^N\setminus \Omega$.

On the other hand, there exists $\tilde c_M > 0$ depending on $M, N$ such that $|DV(x)| \geq A\tilde c_M$ for all $x \in \Omega$. Then, enlarging $A$ in terms of $\gamma, \| f \|_\infty$, but not on $\epsilon$, we conclude that 
\begin{equation}\label{Lisa}
(\epsilon + |DV|^\gamma) IV \leq (\epsilon + |DV|^\gamma) \MM^+ V \leq -(1 + \| f\|_\infty) \quad \mbox{in} \ \Omega.
\end{equation}

For $r > 0$, we denote $\Omega_r = \{ x \in \Omega : d(x) < r \}$, and $\Omega^r = \Omega + B_r(0)$. For $\rho > 0$ small enough, we consider $g_\rho \in C(\R^N)$ such that, for all $\rho > 0$, $g_\rho \in C^2( \Omega^1)$, $g_\rho = g$ in $(\Omega^2)^c$ and such that $\| g_\rho - g\|_{L^\infty(\Omega^2)} \leq \rho$. Hence, we can assume that for each $\rho > 0$, we have that
\begin{equation}
\begin{array}{ccl}
\| g_\rho \|_{L^\infty(\Omega^2)} & \leq & \| g \|_{L^\infty(\Omega^2 \setminus \Omega)} + 1, \\[0.2cm]
 \| Dg_\rho \|_{L^\infty(\Omega^2)} & \leq & C \rho^{-1}, \\[0.2cm] 
 \| D^2 g_\rho \|_{L^\infty(\Omega^2)} & \leq & C \rho^{-2},
\end{array}
\end{equation}
for some constant $C$ just depending on $N, \Omega$.

From \cite[Lemma 3.1]{DQT1}, there exists $\delta > 0$, $\beta_0 \in (0,s)$ and $c_0 > 0$ such that 
\begin{equation*}
\mathcal M^+ d_+^\beta(x) \leq -c_0 d^{\beta - 2s}(x), \quad \mbox{for} \ x \in \Omega_\delta,
\end{equation*} 
for each $\beta \in (0, \beta_0)$. Now, taking $C_1 > 1$ to be fixed, we consider
$$
\psi_\rho(x) = g_\rho(x) + \rho +  C_1 d_+^\beta(x),
$$
which satisfies
$$
(\epsilon + |D\psi_\rho|^\gamma) I \psi_\rho \leq (\epsilon + |\beta C_1 d^{\beta - 1}(x) Dd(x) + Dg_\rho(x)|)^\gamma (-c_0 C_1 d^{\beta - 2s}(x) + C_g),
$$
for each $x \in \Omega_\delta$. Notice that 
$$
C_g \leq C \Lambda (\rho^{-2} + \| g \|_{L^\infty(\tilde \Omega)} + \| g \|_{L^1_{2s}(\mathbb{R}^N\setminus \Omega)}),
$$
for some universal constant $C > 0$.
In view of this,
select $C_1 = C_0(\rho^{-2} + \| f \|_\infty + 1)$ for $C_0$ large enough, and independent of $\epsilon, \rho$ and $f$, such that
\begin{equation*}
(\epsilon + |D\psi_\rho|^\gamma) I \psi_\rho \leq -(\| f \|_\infty + 1) \quad \mbox{in} \ \Omega_\delta,
\end{equation*}
for all $\rho > 0$ and all $\epsilon > 0$. By standard arguments, we have that function
\begin{equation*}
\psi(x) := \inf_{\rho > 0} \{ \psi_\rho(x) \},
\end{equation*}
satisfies 
\begin{equation*}
(\epsilon^2 + |D\psi|^2)^{\gamma/2} I \psi \leq -(\| f \|_\infty + 1) \quad \mbox{in} \ \Omega_\delta,
\end{equation*}
for each $\epsilon > 0$.

Enlarging $C_0$ in terms of $V$, but not on $\rho$, we can assume that $\psi \geq V$ on $\Omega \setminus \Omega_\delta$. Hence, we conclude that  
\begin{equation}\label{barrierV}
\overline V := \min \{ V, \psi \}
\end{equation}
is a viscosity supersolution to~\eqref{eqeps}. By comparison principle, we get $u^\epsilon \leq \overline V$ in $\Omega$. A lower bound can be found similarly, from which the family $\{ u^\epsilon \}_\epsilon$ is uniformly bounded in $\Omega$. We now apply arguments in~\cite{DPT} which guarantee interior H\"older estimates for $\{ u^\epsilon \}_\epsilon$. Then, letting $\epsilon \to 0$, we conclude the existence of a solution $u \in C(\R^N)$ for ~\eqref{eq1}. 

Consequently, $u$ attains the boundary condition continuously. Hence, for each $x \in \Omega$ near the boundary, we have $\bar V(x) = \psi(x)$. Then, if we denote $\hat x$ the projection of $x$ to $\partial \Omega$, we see that for each $\rho > 0$ we have 
\begin{equation}
\begin{array}{ccl}
u(x) - g(\hat x) & \leq & \psi(x) - g(\hat x) \\[0.2cm] 
& \leq & C\rho^{-2} d^\beta(x) + \rho + g_\rho(x) - g(\hat x) \\[0.2cm]
& \leq & C\rho^{-2} d^\beta(x) + 2\rho + m_g(|x - \hat x|) \\[0.2cm]
& \leq & C\rho^{-2} d^\beta(x) + 2\rho + m_g(d(x)),
\end{array}
\end{equation}
for some modulus of continuity $m_g$ which depends only on $g$ and the smoothness of boundary $\partial \Omega$. Minimizing on $\rho$, we obtain existence of constant $C > 0$, such that
\begin{align*}
u(x) - g(\hat x) \leq C d^{\beta/2}(x) + m_g(d(x)).
\end{align*}

A similar lower bound can be obtained, from which we conclude that 
$$
|u(x) - g(\hat x)| \leq \tilde m(d(x)),
$$
for some modulus of continuity $\tilde m$,
from which we conclude that for each $x_0 \in \partial \Omega$, one holds $\psi(x) \to g(x_0)$ as $x \to x_0$, for $x \in \Omega$.

A priori estimates in $(ii)$ follow by comparison principle and the (strict) barriers constructed in~\eqref{Lisa}. Uniqueness property $(iii)$ follows by comparison principle obtained in Proposition~\ref{comparison}.
\end{proof}

%%%%%%%%
\section{Degenerate $s$-harmonic functions}\label{homsec}

In this section, we prove a more general version of Theorem~\ref{teomaxmin}. For this, we consider the class of nonlocal operators $I$ with the form~\eqref{operator} and discuss multiplicity properties for degenerate $I$-harmonic functions, which are solutions for the problem 
\begin{equation}\label{Dirichlet0geral}
\left\{ 
\begin{array}{rcl} |Du|\, I u = 0  & \mbox{in} & \Omega \\[0.2cm]
u = g & \mbox{in} & \mathbb{R}^N\setminus \Omega. 
\end{array} 
\right.
\end{equation}
We consider the problem above, since solutions of $|Du|^\gamma I u = 0$, for some $\gamma>0$, also solves in the viscosity sense $|Du|\, I u = 0$. In addition, we highlight the unique solution of problem
\begin{equation}\label{eqharmonicgeral}
\left\{ 
\begin{array}{rcl} 
I u = 0  & \mbox{in} & \Omega, \\[0.2cm]
u = g & \mbox{in} & \mathbb{R}^N\setminus \Omega. 
\end{array} 
\right. 
\end{equation} 
solves problem \eqref{Dirichlet0geral}. According notation for case $I=\Delta^s$, we denote 
$$
\mathcal{H}(s, g, \Omega) :=\{u: \mathbb{R}^N \to \mathbb{R} \; | \; u \mbox{ solves } \eqref{Dirichlet0geral}\}.
$$
We also define 
\begin{equation}\label{RIO}
\mathcal{P}(\Omega):=\{g: \mathbb{R}^N \setminus \Omega \to \mathbb{R} \; | \; \mbox{problem } \eqref{Dirichlet0geral} \mbox{ has unique solution} \}.
\end{equation}

%%%%%%
\begin{theorem}\label{teomaxmingeral}
Let $s \in (0,1)$ and $\gamma > 0$. Assume $\Omega \subset \R^N$ a bounded domain with $C^2$ boundary, $g \in L^1_{2s}(\mathbb{R}^N\setminus \Omega) \cap C(\mathbb{R}^N\setminus \Omega)$, and $w$ the unique solution of problem \eqref{eqharmonicgeral}. The following statements hold:

\medskip

\begin{enumerate} 
\item[(i)] There exist $\bar u, \underline u \in \mathcal{H}(s,g,\Omega)$, such that for each $u \in \mathcal{H}(s,g,\Omega)$, 
$$
\underline u \leq u \leq \bar u \quad \mbox{in} \ \Omega.
$$

\medskip

\item[(ii)] Assume $\overline u$ (resp. $\underline u$) touches $w$ at some point in $\Omega$, then 
$$
\overline u = w \quad (\mbox{resp. } \underline u = w).
$$

\item[(iii)] Assume $\overline u$ touches $\underline u$ at some point in $x \in \Omega$, then 
$$
g \in \mathcal{P}(\Omega).
$$

\medskip

\item[(iv)] Assume $|Dw|\neq 0$ in $\Omega$, then
$$
g \in \mathcal{P}(\Omega).
$$

\medskip
\item[(v)] Assume that $w$ satisfies the following property: for each sequence $\{ u_k\}_k \subset C(\Omega)$ such that $u_k \to w$ locally uniformly in $\Omega$, $u_k$ has a local maximum point (resp. local minimum point) $x_k \in \Omega$ with uniform distance to the boundary. Then $\underline u < w$ (resp. $\bar u > w$) in $\Omega$.
\end{enumerate}
\end{theorem}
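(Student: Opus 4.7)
The plan is to establish items (i)--(v) in sequence, leaning on Proposition~\ref{comparison} together with a careful interplay between the degenerate equation and its uniformly elliptic background operator $I$. For (i), I would let $u^\eta$ denote the unique solution of $|Du|^\gamma Iu = -\eta$ produced by Theorem~\ref{teo1} for $\eta > 0$. Proposition~\ref{comparison}, applied to $(u, u^\eta)$ with $u \in \mathcal{H}(s, g, \Omega)$ viewed as a subsolution of the $-\eta$ equation, and to pairs $(u^{\eta_1}, u^{\eta_2})$, yields $u \leq u^\eta$ together with pointwise monotonicity of $\{u^\eta\}_\eta$. Uniform interior H\"older estimates (Theorem~\ref{PTreg}), Arzel\`a--Ascoli, and Dini's theorem produce a locally uniformly convergent limit $\bar u \in C(\R^N)$; standard stability of viscosity solutions places $\bar u$ in $\mathcal{H}(s, g, \Omega)$, and by construction $u \leq \bar u$. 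The minimal $\underline u$ is built symmetrically from $f = +\eta$.

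For (ii), the key observation is that $u^\eta$ has no critical points in $\Omega$: an interior zero of $Du^\eta$ would reduce the viscosity equation to $0 = -\eta$, impossible. Since $u^\eta$ is $C^{1,\alpha}$, at any test $\varphi$ touching $u^\eta$ from below at $x_0 \in \Omega$ one has $|D\varphi(x_0)| = |Du^\eta(x_0)| > 0$, so dividing the viscosity inequality by $|D\varphi|^\gamma$ upgrades $u^\eta$ to a viscosity supersolution of the uniformly elliptic equation $Iu = 0$. Stability under the monotone limit preserves this property for $\bar u$. Now $w$ is smooth in $\Omega$ (as an $s$-harmonic function for the uniformly elliptic $I$), and $\bar u \geq w$ with equality at the touching point $x_0$; testing $\bar u$ at $x_0$ with $w$ from below gives
$$\inf_i \sup_j \bigl[L_{ij}[B_\delta] w(x_0) + L_{ij}[B_\delta^c] \bar u(x_0)\bigr] \leq 0.$$
Splitting $L_{ij}[B_\delta^c] \bar u(x_0) = L_{ij}[B_\delta^c] w(x_0) + h_{ij}(\delta)$ with $h_{ij}(\delta) \geq \lambda H(\delta) := \lambda \int_{B_\delta^c(x_0)} (\bar u - w)(z) |z - x_0|^{-(N+2s)} dz \geq 0$ by uniform ellipticity, the inf-sup is bounded below by $Iw(x_0) + \lambda H(\delta) = \lambda H(\delta)$, which combined with the viscosity inequality forces $H(\delta) = 0$ for each $\delta > 0$. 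Hence $\bar u = w$ a.e. on $\R^N \setminus \{x_0\}$, and by continuity $\bar u \equiv w$. The case of $\underline u$ is symmetric.

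For (iii), the sandwich $\underline u \leq w \leq \bar u$ from (i) forces $\underline u(x_0) = w(x_0) = \bar u(x_0)$ at any interior touching point of $\bar u$ and $\underline u$; two applications of (ii) give $\bar u = w = \underline u$ in $\Omega$, whence every $u \in \mathcal{H}(s, g, \Omega)$ equals $w$. For (iv), I would construct an upper barrier $\tilde w_\eta = w + C\eta \phi$, where $\phi \geq 0$ vanishes outside $\Omega$ and satisfies $\mathcal{M}^+ \phi \leq -c_0 < 0$ (a rescaling of the paraboloid in Lemma~\ref{barrera} supplies such $\phi$). The elementary estimate $I(w + C\eta\phi) \leq Iw + C\eta \mathcal{M}^+\phi \leq -C c_0 \eta$ valid for min-max operators, combined with $|D\tilde w_\eta| \geq |Dw|/2$ for $\eta$ small (using $|Dw| > 0$), makes $\tilde w_\eta$ a supersolution of $|Du|^\gamma Iu = -\eta$ for $C$ large, with the same exterior data. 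Proposition~\ref{comparison} gives $u^\eta \leq \tilde w_\eta$; letting $\eta \to 0$ yields $\bar u \leq w$, which with $\bar u \geq w$ forces $\bar u = w$. The minimal extremal is treated identically, proving uniqueness.

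For (v), suppose $\underline u = w$. By (i), the monotone sequence $\{u^\eta\}_\eta$ associated with $f = +\eta$ increases to $\underline u$, and Dini's theorem gives locally uniform convergence. The hypothesis on $w$ then provides, for each small $\eta$, a local maximum $x_\eta \in \Omega$ of $u^\eta$ with $\mathrm{dist}(x_\eta, \partial\Omega) \geq c > 0$. Interior $C^1$ regularity forces $Du^\eta(x_\eta) = 0$, contradicting $|Du^\eta(x_\eta)|^\gamma I u^\eta(x_\eta) = \eta > 0$. Hence $\underline u \neq w$ at some point, and (ii) upgrades this to $\underline u < w$ throughout $\Omega$; the case of local minima is analogous. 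I expect the main difficulty to concentrate in (ii), specifically in upgrading $\bar u$ from a solution of the degenerate equation to a viscosity supersolution of $Iu = 0$ — this step relies crucially on the absence of critical points of the approximating $u^\eta$, and the subsequent touching-point analysis requires careful manipulation of the nonlocal principal value integrals against the family $\{K_{ij}\}$, exploiting uniform ellipticity.
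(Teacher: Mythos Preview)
Your overall strategy matches the paper closely: both construct $\bar u, \underline u$ as monotone limits of the unique solutions $u^\eta$ of the problems with right-hand side $\mp\eta$ (Theorem~\ref{teo1}, Proposition~\ref{comparison}), and both hinge item (ii) on the observation that $u^\eta$ is a viscosity supersolution of the uniformly elliptic equation $Iu = 0$ in $\Omega$, since any test function touching from below must have nonvanishing gradient. Items (iii) and (v) coincide with the paper's arguments.

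The substantive divergence is in the execution of (ii). You pass to the limit to obtain that $\bar u$ is a viscosity supersolution of $Iu = 0$, and then test $\bar u$ at the touching point with $w$ itself, exploiting the positivity of $\bar u - w$ in the tail to force $H(\delta)=0$. This is clean when $I = \Delta^s$ (where $w$ is $C^\infty$), but for the general Isaacs operator of~\eqref{operator} the $I$-harmonic $w$ is only known to be $C^{1,\alpha_K}$, and there is no guarantee that $\alpha_K > 2s - 1$, which is what you would need to evaluate $L_{ij}[B_\delta]w(x_0)$ and use $w$ as a test function in the sense of~\eqref{eqdef}. The paper sidesteps this regularity issue by staying at the level of the approximants $u^\eta$: assuming $\bar u > w$ on some ball $B_a(y)$, it builds a classical \emph{subsolution} $W = w + \varphi_\epsilon + a\,\mathbf{1}_{B_{a/4}(y')}$ of $IW \geq 0$ on a small neighborhood $O$ of the touching point, where $\varphi_\epsilon$ is a small smooth bump at $x_0$ and the indicator lives where the gap is already of size $a$. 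Comparison for the uniformly elliptic equation on $O$ against the supersolution $u^\eta$ yields $W(x_0) \leq u^\eta(x_0)$, which contradicts $u^\eta(x_0) \to w(x_0)$ after sending $\eta \to 0$. Your route can be repaired in the general case by invoking instead the strong maximum principle for $\mathcal{M}^-$ applied to $\bar u - w$ (established via doubling of variables rather than direct testing), which avoids any pointwise regularity assumption on $w$.

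For (iv) the paper perturbs with $C\eta\,\mathbf{1}_\Omega$ rather than a paraboloid; this has the minor advantage that $D(w + C\eta\,\mathbf{1}_\Omega) = Dw$ exactly in $\Omega$, so the lower bound $|Dw| \geq c_0 > 0$ is used directly without the intermediate step $|D\tilde w_\eta| \geq |Dw|/2$. Both barriers lead to the same conclusion.
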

%%%%

We start introducing approximating solutions: let $u^\eta$ (resp. $u_\eta$) be the unique viscosity solution to the problem
\begin{equation}\label{eqeta}
\left \{ \begin{array}{rll} |Du| I u & = -\eta \ (resp. = +\eta) \quad & \mbox{in} \ \Omega, \\
u & = g \quad & \mbox{in} \ \mathbb{R}^N\setminus \Omega. \end{array} \right .
\end{equation}
By comparison principle, we have $u_\eta \leq u^\eta$ in $\Omega$, and both functions are locally Lipschitz continuous in $\Omega$. Moreover, 
there exists a constant $C > 0$ just depending on $N, s, \Omega$ and the ellipticity constants, but not on $\eta$, such that
$$
\max\{\| u_\eta \|_{L^\infty(\Omega)}, \ \| u^\eta \|_{L^\infty(\Omega)}\} \leq C (1 + \| g\|_{L^\infty(\partial \Omega)} + \| g \|_{L^1_{2s}(\mathbb{R}^N\setminus \Omega)}).
$$

\begin{proof}[Proof of (i)] Let $\eta_0 > 0$ and for each $\eta \in (0,\eta_0]$ let $u_\eta$ be the unique solution to~\eqref{eqeta}. By comparison principle we have $-\| g \|_\infty \leq u_\eta \leq u_{\eta_0}$. Thus, from Theorem \ref{PTreg} the family $\{ u_\eta \}_\eta$ is uniformly bounded and equicontinuous. Hence, by Arzela-Ascoli's theorem and stability results of viscosity solutions, taking the uniform limit $\bar u = \limsup_{\eta \to 0} u_\eta$ we obtain the existence of a solution to~\eqref{Dirichlet0geral}. The maximality of $\bar u$ comes with the fact that any solution $u$ to~\eqref{Dirichlet0geral} is a subsolution to the problem solved by $u_\eta$. Then, by comparison principle, we have $u \leq u_\eta$ and the result holds by taking limit. A similar analysis can be done for the minimal solutions.
\end{proof}

\begin{proof}[Proof of (ii)] Assume that $\underline u(x_0) = \bar u(x_0)$ for some point $x_0 \in \Omega$. Particularly, $\bar u(x_0) = w(x_0)$, where $w$ is the unique solution to~\eqref{eqharmonicgeral}. Let $S = \{ x \in \Omega : \bar u(x) = w(x) \}$. Then, $\mathbb{R}^N \setminus S$ is relatively open. If $\mathbb{R}^N \setminus  S \neq \emptyset$, then there exists $a > 0$ and a point $y \in \Omega$ such that $w + a \leq \bar u \leq u^\eta$ in $B_a(y)$ for all $\eta$. Notice that $u^\eta$ is such that $I u^\eta \leq 0$ in $\Omega$, since no smooth function touching $u^\eta$ from below has null gradient. Then, consider $O = B_{a/4}(x_0)$, and notice that there exists $y' \in \Omega$ such that $B_{a/4}(y') \subset B_a(y)$ and $B_{a/4}(y') \subset \mathbb{R}^N \setminus O$.

Then, consider the function 
$$
W = w + \varphi_\epsilon + a \mathbf{1}_{B_{a/4}(y')},
$$
where for $\epsilon \in (0,a)$, $\varphi_\epsilon$ is a nonnegative, smooth function with support in $B_\epsilon(x_0)$, such that $\varphi_\epsilon(x_0) = \epsilon$, and $\| \varphi_\epsilon \|_{C^2(\R^N)} \to 0$ as $\epsilon \to 0$. Notice that $W \leq u^\eta$ in $\mathbb{R}^N \setminus O$ and that for each $x \in O$, we have
$$
I W(x) \geq Iw(x) + \MM^- \varphi_\epsilon(x) + a \MM^- \mathbf{1}_{B_{a/r}(y')} (x) \geq o_\epsilon(1) + c_a, 
$$
where $c_a > 0$ and $o_\epsilon(1) \to 0$ as $\epsilon \to 0$. Then, we fix $\epsilon > 0$ small enough in terms of $a$ to conclude that 
$$
IW \geq 0 \quad \mbox{in } \; O. 
$$
Then, by comparison principle, we have 
$$
W(x_0)=w(x_0) + \epsilon \leq u^\eta(x_0)
$$
but this contradicts the fact that $u^\eta(x_0) \to \bar u(x_0) = w(x_0)$ as $\eta \to 0$. 
\end{proof}

\begin{proof}[Proof of (iii)]
One easily follows from (i) and (ii).
\end{proof}

\begin{proof}[Proof of (iv)]
Assume $\inf_{\Omega} |Dw| = c_0 > 0$. For $C > 0$ to be fixed, consider 
$$
w_\eta = w + C\eta \mathbf{1}_\Omega.
$$
Hence, we see that for each $x \in \Omega$, we obtain
\begin{equation}\nonumber
\begin{array}{ccl}
|Dw_\eta(x)|^\gamma I w_\eta & \leq & \displaystyle |Dw(x)|^\gamma (I w(x) - C \eta \inf_{K} \int_{\mathbb{R}^N\setminus \Omega} K(x- y)dy) \\
& \leq & - C c_0 \displaystyle \lambda \eta \int_{\mathbb{R}^N\setminus \Omega} |y - \hat x|^{-(N + 2s)}dy,
\end{array}
\end{equation}
where $\hat x$ is a point in $\Omega$ maximizing the distance to the boundary. Hence, there exists constant $C$ depending on $c_0$, $\mathrm{diam}(\Omega)$, $\lambda$, $N$ and $s$, such that 
$$
|Dw_\eta|^\gamma I w_\eta \leq -\eta \quad \mbox{in } \; \Omega.
$$
Then, by comparison principle, we have $w \leq u^\eta \leq w_\eta$ for all $\eta$, and taking the limit as $\eta \to 0$ we conclude that $\bar u = w$ in $\Omega$. A similar analysis can be done for $\underline u$ to conclude that $\underline u = w$.
\end{proof}

\begin{proof}[Proof of (v)] We prove the result for the maximal solution, the proof for the minimal solution follows the same lines. Assume by contradiction $\underline u = w$. Then, $u_\eta \to w$ locally uniformly in $\Omega$ as $\eta \to 0^+$, from which we have $u_\eta$ has a local maximum point in $x_\eta \in \Omega$ with uniform distance to the boundary. Then, a constant function touches from above $u_\eta$ at $x_\eta$. Using the viscosity inequality for $u_\eta$ at $x_\eta$, we have
$
0 \geq \eta > 0,
$
which is a contradiction.
\end{proof}

\begin{remark}%\label{rmkiv}
We stress the fact that $\underline u, \overline u$ are understood as minimal and maximal solutions of the problem in the class of bounded solutions, since there are plenty of large solutions (blowing-up on the boundary) for problem~\eqref{eqharmonicgeral}, see for instance~\cite{DQThar}. 
\end{remark}

We conclude with the following stability result for the extremal solutions. 

\begin{proposition}\label{propestmax}
Let $g_k, g \in C(\mathbb{R}^N\setminus \Omega)$ such that $g_k \to g$ locally uniform in $\mathbb{R}^N\setminus \Omega$. In addition, for some $M > 0$ and $ \sigma \in (0,2s)$, assume that 
\begin{equation}\label{growth of g}
\max\{|g_k|, |g|\} \leq M(1 + |x|)^\sigma, \quad \mbox{for } \; x \in \mathbb{R}^N\setminus \Omega.
\end{equation}
Denote $\bar u_k, \bar u$ the maximal solutions associated to $g_k$ and $ g$, respectively. Then, $\bar u_k \to \bar u$ uniformly in $\Omega$. The same result holds for the associated minimal solutions.
\end{proposition}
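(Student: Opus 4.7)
The plan is based on the monotone representation of the maximal solution as an approximation by non-degenerate problems: $\bar u_k = \inf_{\eta > 0} u^{k,\eta}$ and $\bar u = \inf_{\eta > 0} u^\eta$, where $u^{k,\eta}$ (resp.\ $u^\eta$) is the unique viscosity solution of $|Du|\,Iu = -\eta$ in $\Omega$ with exterior $g_k$ (resp.\ $g$), uniqueness being guaranteed by Proposition~\ref{comparison}. The monotonicity $u^{\eta_1} \leq u^{\eta_2}$ for $0 < \eta_1 \leq \eta_2$ also follows from Proposition~\ref{comparison}. From this monotone representation, the pointwise elementary inequality $\inf_\eta a_\eta - \inf_\eta b_\eta \leq \sup_\eta (a_\eta - b_\eta)$ applied with $a_\eta = u^{k,\eta}(x)$, $b_\eta = u^\eta(x)$ gives the crucial bound
$$
\|\bar u_k - \bar u\|_{L^\infty(\Omega)} \leq \sup_{\eta > 0} \|u^{k,\eta} - u^\eta\|_{L^\infty(\Omega)},
$$
reducing the uniform convergence $\bar u_k \to \bar u$ to a uniform-in-$\eta$ stability estimate for the non-degenerate approximations with respect to the exterior data.

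To establish this uniform-in-$\eta$ stability, Proposition~\ref{comparison} combined with the translation invariance of $|Du|\,Iu$ first yields
$$
\|u^{k,\eta} - u^\eta\|_{L^\infty(\Omega)} \leq \|g_k - g\|_{L^\infty(\mathbb{R}^N \setminus \Omega)} \quad \text{for every } \eta > 0,
$$
since if $g_k \leq g + c$ pointwise in the exterior, then $u^\eta + c$ is a supersolution of the $g_k$-problem and hence $u^{k,\eta} \leq u^\eta + c$ by comparison, and analogously in the reverse direction. Because only local uniform convergence $g_k \to g$ is available, this $L^\infty$ bound is refined to
$$
\|u^{k,\eta} - u^\eta\|_{L^\infty(\Omega)} \leq C\Big(\|g_k - g\|_{L^\infty(K \cap \Omega^c)} + \|g_k - g\|_{L^1_{2s}(K^c \cap \Omega^c)}\Big)
$$
for any bounded $K \supset \bar\Omega$, with $C = C(N, s, \Lambda, \Omega, K)$ independent of $\eta \in (0, \eta_0]$ and $k$. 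The far-field $L^1_{2s}$ contribution is handled by a barrier argument exploiting the polynomial decay of the kernels in $\mathcal L_0$: perturbations of the exterior data on $K^c$ enter the equation only through an $L^1_{2s}$-weighted integral, which can be absorbed as a small source and controlled via the $L^\infty$ estimate of Theorem~\ref{teo1}(ii). Under the hypothesis $\sigma < 2s$, dominated convergence with the integrable dominator $4M(1+|y|)^{\sigma - N - 2s}$ drives $\|g_k - g\|_{L^1_{2s}(K^c \cap \Omega^c)} \to 0$ uniformly in $k$ as $K \to \infty$.

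Given $\varepsilon > 0$, one then chooses $K$ sufficiently large so that $\|g_k - g\|_{L^1_{2s}(K^c \cap \Omega^c)} < \varepsilon/(2C)$ uniformly in $k$, and subsequently $k$ large enough so that $\|g_k - g\|_{L^\infty(K \cap \Omega^c)} < \varepsilon/(2C)$ by local uniform convergence on the compact set $K \cap \Omega^c$. This yields $\sup_{\eta > 0} \|u^{k,\eta} - u^\eta\|_\infty < \varepsilon$, and hence $\|\bar u_k - \bar u\|_\infty < \varepsilon$. The minimal-solution case $\underline u_k \to \underline u$ follows by the symmetric argument, with the monotone increasing approximations $u_{k,\eta} \uparrow \underline u_k$ arising from the $+\eta$ source problem. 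The principal technical obstacle is the $L^1_{2s}$ component of the stability estimate, whose quantitative character demands a barrier construction tailored to the nonlocal operator and goes beyond the qualitative stability of viscosity solutions, whereas the $L^\infty$ Lipschitz bound via comparison and translation invariance is essentially immediate.
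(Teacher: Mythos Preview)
Your approach is correct and rests on the same barrier computation as the paper's proof: one shows that $u_k^\eta + C\epsilon$, with exterior datum switched to $g$, is a viscosity supersolution of the $g$-problem (the added constant in $\Omega$ contributes $-C\epsilon\int_{\Omega^c}K(x-y)\,dy \leq -C\epsilon\,c_0$ to the operator, which absorbs both the near-field $L^\infty$ discrepancy on $B_R\setminus\Omega$ and the far-field tail $\int_{B_R^c}|g_k-g|K(x-y)\,dy$), yielding $u^\eta \leq u_k^\eta + C\epsilon$ uniformly in $\eta$. The organizational difference is that the paper treats the two inequalities asymmetrically---$\limsup_k \bar u_k \leq \bar u$ follows softly from stability of viscosity solutions together with maximality of $\bar u$, while only $\bar u \leq \liminf_k \bar u_k$ uses the barrier---whereas you obtain both directions at once from the two-sided estimate $\|u^{k,\eta}-u^\eta\|_{L^\infty(\Omega)} \leq C(\|g_k-g\|_{L^\infty(K\setminus\Omega)} + \|g_k-g\|_{L^1_{2s}(K^c)})$ and the elementary bound $|\inf_\eta a_\eta - \inf_\eta b_\eta|\leq \sup_\eta|a_\eta-b_\eta|$. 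Your route is a bit more symmetric and quantitative; the paper's saves work on the easy direction. One minor imprecision: Theorem~\ref{teo1}(ii) gives an a priori $L^\infty$ bound on solutions, not Lipschitz dependence on the data, so it does not directly deliver the refined stability estimate---what does is exactly the barrier construction you describe, and that is what the paper writes out.
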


\begin{proof}
By Theorem~\ref{teo1}, we have $\limsup_{k \to \infty} \bar u_k$ exists in the uniform sense in $\Omega$ and by stability this function solves $|Du|^\gamma Iu = 0$ in $\Omega$, with exterior data $g$. Thus, we have 
$$
\limsup_{k \to \infty} \bar u_k \leq \bar u
$$ 
by maximality of $\bar u$. 

Let $\epsilon > 0$. By the assumption (\ref{growth of g}) there exists $R_0 > 1$ large enough depending on $\epsilon$ such that $\Omega \subset B_{R_0/2}$ and for all $R > R_0$ we have
$$
\int_{\mathbb{R}^N \setminus B_R} |g_k(y) - g(y)| K(x - y)dy \leq \epsilon,
$$
for all $x \in \Omega$ and all $K \in \mathcal L_0$. Fixed $R$ in this way, for all $k$ large enough we have
$$
g_k + \epsilon \geq g \quad \mbox{in} \ \mathbb{R}^N \setminus B_R.
$$ 

Now, for $\eta > 0$, denote $u_k^\eta$ the approximating function to $\bar u_k$ and  for $C > 1$ to be fixed, we consider $v: \R^N \to \R$ given by
$$
v(x) = \left \{ \begin{array}{ll} u_k^\eta + C\epsilon \quad & \mbox{in} \ \Omega, \\ g \quad & \mbox{in} \ \mathbb{R}^N\setminus \Omega. \end{array} \right .
$$

Notice this function is lower semicontinuous in $\R^N$, and we have $v \geq g$ in $\mathbb{R}^N\setminus \Omega$. Given $x \in \Omega$ and $K \in \mathcal L_0$, we have
\begin{align*}
L_K v(x) = & \int_{\Omega}[u^\eta_k(y) - u^\eta_k(x)]K(x - y)dy \\
& + \int_{\mathbb{R}^N\setminus \Omega} [g(y) - C\epsilon - u_k^\eta(x)]K(x - y)dy \\
\leq & L_K u^\eta_k(x) - (C-1)\epsilon \int_{B_R \setminus \Omega} K(x - y) dy \\
& + \int_{\mathbb{R}^N \setminus B_R} |g(y) - g_k(y)| K(x - y)dy.
\end{align*}

By the boundedness of $\Omega$, there exists $c_0 > 0$ such that, for all $R \geq R_0$ large enough, we have 
$$
\int_{B_R\setminus \Omega} K(x-y)dy \geq c_0, 
$$
for all $x \in \Omega$ and all $K \in \mathcal L_0$. Thus, taking $C$ large enough, we conclude that
$$
L_K v(x) \leq L_K u^\eta_k(x), \quad \mbox{for all} \ x \in \Omega, \ K \in \mathcal L_0, \ \eta > 0,
$$
and since $Dv(x) = Du^\eta_k(x)$ (in the viscosity sense), we conclude that $v$ satisfies
$$
|D v|^\gamma I v(x) \leq -\eta \quad \mbox{in} \ \Omega,
$$
and is such that $v \geq g$ in $\mathbb{R}^N\setminus \Omega$. Thus, if we denote $u^\eta$ as the approximating function to $\bar u$, we conclude that
$$
u^\eta \leq u_k^\eta + C\epsilon \quad \mbox{in} \ \Omega,
$$
and taking limits as $\eta \to 0$ we arrive at
$$
\bar u \leq \bar u_k + C\epsilon \quad \mbox{in} \ \Omega, 
$$
and taking $\liminf_{k \to \infty}$ and $\epsilon \to 0$ next, we arrive at
$$
\bar u \leq \liminf_{k \to \infty} \bar u_k \leq \limsup_{k \to \infty} \bar u_k \leq \bar u,
$$
and the result follows.
\end{proof}

%%%%%%%%%
\section{Multiplicity in non-local scenarios} \label{sec-examples}

Now we present some examples of uniqueness and non-uniqueness for the homogeneous case of the Dirichlet problem~\eqref{eq}. We restrict ourselves to the case $I = \Delta^s$ in order to take advantage of the larger information about $s$-harmonic functions available in the literature. One of the main tools is the following version of the strong maximum principle for $(s, \gamma)$-subharmonic functions. We postpone the proof of the result below to the final of the section. 

%%%
\begin{proposition}\label{strong maximum principle}
Assume $\Omega$ is connected. Let $u \in USC(\Omega) \cap L^\infty(\R^N)$ be a viscosity subsolution to $|Du|^\gamma \Delta^s u \geq 0$ in $\Omega$ and assume there exists $x_0 \in \Omega$ such that $\max_{\R^N} u = u(x_0)$. Then, $u$ is constant in $\Omega$.
\end{proposition}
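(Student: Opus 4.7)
The plan is to argue by contradiction via a Hopf-type barrier adapted to the nonlocal degenerate setting. Normalize so that $M = u(x_0) = 0$, hence $u \le 0$ on $\R^N$ and $u(x_0) = 0$ with $x_0 \in \Omega$, and assume the coincidence set $S := \{u = 0\} \cap \Omega$ is a proper subset of $\Omega$. By upper semicontinuity $S$ is relatively closed in $\Omega$, and since $\Omega$ is connected one can produce an open ball $B_\rho(\tilde y) \subset \Omega \setminus S$ whose closure touches $S$ at an interior point $x_1 \in \partial B_\rho(\tilde y) \cap S$; write $\nu := (x_1 - \tilde y)/\rho$ for the outer unit normal.

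The crux is to produce a smooth test function $\varphi$ touching $u$ from above at $x_1$ in some ball $B_\delta(x_1)$, with $D\varphi(x_1) \ne 0$. This is the point where the degeneracy bites: at a maximum point, every globally dominating smooth $\varphi$ is forced to have vanishing gradient, making the subsolution inequality $|D\varphi|^\gamma \Delta^s \varphi \ge 0$ vacuous. The Hopf configuration at $x_1$ allows us to localize: the affine candidate $\varphi(x) = \epsilon\, \nu \cdot (x - x_1)$ dominates $u \le 0$ automatically in the outer half $\{\nu \cdot (x - x_1) \ge 0\}$, while in the inner half, which up to quadratic curvature correction lies inside $B_\rho(\tilde y)$ where $u < 0$, the inequality $\varphi \ge u$ reduces to a one-sided Lipschitz-type drop-off of $u$ near $x_1$. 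To secure this, the plan is to first regularize $u$ by its sup-convolution $u^\sigma(x) = \sup_y\{u(y) - |x - y|^2/(2\sigma)\}$ with small $\sigma > 0$: by standard nonlocal viscosity arguments and the monotonicity of $|p|^\gamma I$ in the nonlocal argument, $u^\sigma$ remains a viscosity subsolution of the same inequality on a slightly shrunken domain $\Omega_\sigma$, and is Lipschitz and semiconvex there, supplying the required quantitative control.

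With $\varphi$ admissible at the corresponding tangent point $x_1^\sigma$, the linearity of $\varphi$ gives $\Delta^s[B_\delta]\varphi(x_1^\sigma) = 0$ by odd symmetry, so dividing the viscosity inequality by $|\epsilon \nu|^\gamma > 0$ yields
\[
\Delta^s[B_\delta^c]\, u^\sigma(x_1^\sigma) \;=\; C_{N,s} \int_{\R^N \setminus B_\delta(x_1^\sigma)} \frac{u^\sigma(y)}{|y - x_1^\sigma|^{N+2s}}\, dy \;\ge\; 0.
\]
But $u^\sigma \le 0$ on $\R^N$ and $\{u^\sigma < 0\}$ contains a positive-measure subset of $B_\rho(\tilde y)$ lying outside $B_\delta(x_1^\sigma)$ for small $\delta$, giving a strictly negative upper bound $-c_0$ on the integral that is independent of $\delta$. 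The contradiction $0 \le -c_0$ closes the argument, and sending $\sigma \to 0$ transfers the conclusion to the original $u$. The main obstacle throughout is precisely the construction of an admissible test function with nonvanishing gradient: the degeneracy forbids any such $\varphi$ at the maximum itself, and only the combination of the tangential Hopf geometry (which localizes the test to a small ball) with the sup-convolution regularization (which supplies the quantitative drop-off rate) makes the nonlocal integral inequality extract useful information at $x_1$.
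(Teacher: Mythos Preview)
Your plan has a genuine gap at the decisive step: the affine function $\varphi(x)=\epsilon\,\nu\cdot(x-x_1)$ cannot touch $u^\sigma$ from above at $x_1$, and the sup-convolution makes this worse, not better. Indeed, sup-convolution renders $u^\sigma$ semiconvex, so at the maximum point $x_1^\sigma$ (where $u^\sigma(x_1^\sigma)=0$) one has the two-sided estimate $u^\sigma(x_1^\sigma+h)+u^\sigma(x_1^\sigma-h)\ge -C_\sigma|h|^2$; since both summands are nonpositive, each satisfies $u^\sigma(x_1^\sigma+h)\ge -C_\sigma|h|^2$. Hence along the inward ray $x=x_1^\sigma-t\nu$ one gets $u^\sigma\ge -C_\sigma t^2>-\epsilon t=\varphi$ for all small $t>0$, so $\varphi$ lies \emph{below} $u^\sigma$ there and is not an admissible test function. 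The ``one-sided Lipschitz drop-off'' you need is an \emph{upper} bound $u^\sigma(x)\le -c\,\nu\cdot(x_1^\sigma-x)$; neither Lipschitz continuity nor semiconvexity gives this (both give lower bounds), and in fact such a linear drop-off at a maximum point is precisely a Hopf-type conclusion, which is what you are trying to prove.

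The paper avoids this circularity by not testing at the tangential point at all. It builds an explicit barrier $v(x)=-(E_\epsilon(x)-E_\epsilon(x_2))$ from truncated fractional fundamental solutions, with $\Delta^s v<0$ strictly on the annulus $B_r\setminus B_{r/2}$ and $Dv\neq 0$ there by construction. One then maximizes $u-\beta v$ globally: for small $\beta$ the maximum is forced into the annulus (not onto $\partial B_r$), and at that point $\beta v$ serves as a test function with nonzero gradient, yielding the contradiction $0>|Dv|^\gamma\Delta^s v\ge 0$. The point is that the barrier's radial structure guarantees $Dv\neq 0$ at the eventual touching point without requiring any a priori decay rate for $u$.
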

\noindent
\textit{Uniqueness for linear exterior datum.}
In case $g(x)=a\cdot x + b$ for $a \in \R^N\setminus \{0\}$ and $b \in \R$, we consider $g \cdot \mathbf{1}_{\R^N\setminus \Omega}$ as the exterior data. We observe that $g\cdot  \mathbf{1}_{\Omega}$ is the corresponding $s$-harmonic function. Since $|Dg|\neq 0$ in $\Omega$, we apply Theorem~\ref{teomaxmin}, which implies $g \in \mathcal{P}(\Omega)$. The case $g(x)=b$, leads to $ \mathbf{1}_{\Omega}$ as the unique $s$-harmonic function. By Proposition \ref{strong maximum principle}, we have $\mathcal{H}(\gamma,s,g,\Omega)=\{g \cdot  \mathbf{1}_{\Omega}\}$, and so $g \in \mathcal{P}(\Omega)$. 

\vspace{0.2cm}

\noindent
\textit{Constructing a less explicit example.} By following results in~\cite{DSV}: let $f \in C^1(\Omega)$ such that $\inf_{x \in \Omega} |Df(x)| = c_0 > 0$. Then, there exists $R > 1$ large enough (in such a way $\Omega \subset B_{R/2}$) and $w$ a $s$-harmonic function in $\Omega$ such that $w = 0$ in $\mathbb{R}^N\setminus B_R$, and $\| w - f \|_{C^1(\Omega)} \leq c_0/2$. Thus, we have that $|Dw|$ does not vanish in $\Omega$ and so, from Theorem~\ref{teomaxmin}, $g \in \mathcal{P}(\Omega)$, for $g=f\cdot \mathbf{1}_{B_R\setminus \Omega}$ defined in $\R^N \setminus \Omega$. 
\vspace{0.1cm}

\noindent
\textit{Existence of, at least, two $(\gamma, s)$-harmonic functions.} Let $\Omega = B_1$ and let $g \in C(\R^N)$ with $g \geq 0$ in $\R^N$, $g = 0$ in $B_1$ and $g=1$ in $\mathbb{R}^N\setminus B_2$. Note that $g \in \mathcal{H}(\gamma,s,g,B_1)$ and $\Delta^s g>0$ in $B_1$. In this case, the unique $s$-harmonic function $w \in \mathcal{H}(\gamma,s,g,B_1)$ is positive in $B_1$. On the other hand, from Theorem \ref{teomaxmin} and the strong maximum principle, the minimal solution $\underline u$ cannot have a negative minimum in $\Omega$, and so $\underline u = g$.
\vspace{0.1cm}

\noindent
\textit{Maximal solution coinciding with the $s$-harmonic function.}
For $2s > 1$, assume $\Omega = B_1$, $g: \mathbb{R}^N\setminus B_1 \to \R$ radial, and that the unique $s$-harmonic function $w \in \mathcal{H}(\gamma,s,g,B_1)$ is radially non-increasing. Then, for the maximal solution $\bar u$, we have $\bar u = w$. In fact, let $\sigma := 1 + \frac{2s - 1}{1 + \gamma} < 2s$ and for $R > 2$, consider the function 
$$
\xi(x) = (R^\sigma - |x|^\sigma)_+, \quad x \in \R^N.
$$
For $0 < |x| < 1$ and $R > 2$, we have
\begin{align*}
\Delta^s \xi(x) & = -\Delta^s |x|^\sigma + C_{N, s}\int_{\mathbb{R}^N\setminus B_R} \frac{|y|^\sigma- R^\sigma}{|x - y|^{N + 2s}}dy \\
& \leq -c |x|^{\sigma - 2s} + 2^{N + 2s} C_{N, s}\int_{\mathbb{R}^N \setminus B_R} \frac{|y|^\sigma}{|y|^{N + 2s}}dy \\
& = -c |x|^{\sigma - 2s} + \frac{2^{N + 2s} R^{\sigma - 2s} C_{N, s}}{2s - \sigma},
\end{align*}
where $c > 0$ in view of the estimates in~\cite{FQ} (see lemmas 3.1, 3.2 and 3.3 there). Then, by choosing $R$ large enough, we get that
$$
\Delta^s \xi(x) \leq -\frac{c}{2} |x|^{\sigma - 2s}, \quad \mbox{for} \ |x| < 1,
$$
in the viscosity sense. Now, for $\epsilon > 0$ consider the function
$$
V(x) = w(x) + \epsilon \xi(x), \quad x \in \R^N.
$$

For each $0 < |x| < 1$, we have that
$$
|DV(x)|^\gamma \Delta^s V(x) = \epsilon |Dw(x) - \epsilon \sigma |x|^{\sigma - 1} \hat x|^\gamma \Delta^s \xi(x), 
$$
and since $w \in C^{2s + \alpha}(B_1)$ is radially decreasing, we have
$$
|DV(x)| \geq \epsilon \sigma |x|^{\sigma - 1},
$$
from which
$$
|DV(x)|^\gamma \Delta^s V(x) \leq -\epsilon^{1 + \gamma} \sigma^\gamma c/2, \quad \mbox{for} \ x \in B_1,
$$
in the viscosity sense. Then, since $V \geq g$ in $\mathbb{R}^N\setminus B_1$, we can take $\epsilon = \epsilon(\eta) \to 0$ as $\eta \to 0$ such that $V$ is a supersolution to $u^\eta$ solving~\eqref{eqeta} (with right-hand side $-\eta$). Then, by Proposition~\ref{comparison}, we conclude that
$$
u^\eta \leq w + \epsilon(\eta) \xi \quad \in B_1.
$$
Taking $\eta \to 0$, we conclude that $\bar u \leq w \leq \bar u$, and the maximal solution meets the $s$-harmonic function. 

We do not know under which conditions the $s$-harmonic function satisfies the monotonicity condition requested in this example. In Figure~\ref{fig:Kariri}, we provide an example of such a function.

\medskip

\noindent
\textit{Existence of, at least, four $(\gamma,s)$-harmonic functions.} Consider $\Omega = (-1,1)$ and an odd, bounded, smooth function $g: \R \to \R$ such that $g = 0$ in $\Omega$ and not identically zero in $\R$. In particular, we assume $g > 0$ in $(1, +\infty)$.

We have that the unique solution $w$ to $\Delta^s w = 0$ in $\Omega$ with $w = g$ in $\mathbb{R}^N\setminus \Omega$ is not identically zero in $\Omega$. If this is the case, for $x \in \Omega$ close to $1$ we have
\begin{align*}
0 = & C_{1,s} \int_{-\infty}^{-1} g(y) |x - y|^{-(1 + 2s)}dy + C_{1,s}\int_{1}^{+\infty} g(y) |x - y|^{-(1 + 2s)}dy \\
  = & -C_{1,s}\int_{1}^{+\infty} g(y) |x + y|^{-(1 + 2s)} dy + C_{1,s}\int_{1}^{+\infty} g(y) |x - y|^{-(1 + 2s)}dy \\
  = & C_{1,s}\int_{1}^{+\infty} g(y) \Big{(} |x - y|^{-(1 + 2s)} - |x + y|^{-(1 + 2s)} \Big{)}dy > 0,
\end{align*}
which is a contradiction. In addition, we have $w$ is odd, since by defining $\tilde w(x) = -w(-x)$, we have $\tilde w$ satisfies the same equation as $w$. Thus, $w$ changes sign in $\Omega$. This function $w$ is a solution for the degenerate equation.

On the other hand, the function $g$ is itself a solution to the degenerate equation and this is different from $w$. Moreover, the maximal and minimal solutions $\bar u$ and $\underline u$ cannot be equal to $w$ or $g$ by maximality/minimality. Then, we have four $(\gamma, s)-$harmonic functions with the same exterior data.

The question here is how do we find the trivial solution equal to zero in $\Omega$? This involves a more general question regarding the existence of solutions aside the $s$-harmonic, and the extremal ones. In fact, this suggests different ways to obtain solutions to the homogeneous Dirichlet problem. In this example, we notice that for each $\epsilon \in (0,1)$, the function $u_0: \R \to \R$ given by $u_0 = 0$ in $\Omega$, $u = g$ in $\mathbb{R}^N\setminus \Omega$ is a classical solution to the problem $(\epsilon^2 + |u_x|^2)^{\gamma/2} \Delta^s u = -\epsilon^{\gamma} f$ in $(-1,1)$, where $f(x) = \Delta^s (\chi_{\mathbb{R}\setminus (-1,1)} g)(x)$. Notice that $f$ changes sign in $(-1,1)$.

\medskip

We conclude this section with the
\begin{proof}[Proof of Proposition~\ref{strong maximum principle}]
Assume $N \geq 2$. For $\sigma \in (2s - N, 0)$, denote $E(x) = |x|^\sigma, x \neq 0$. We have the existence of $c < 0$ such that
$$
\Delta^s E(x) = c |x|^{\sigma - 2s}, \quad x \neq 0,
$$
see \cite[Lemmas 3.1, 3.2, and 3.3]{FQ}. For $\epsilon > 0$, let $E_\epsilon \in L^\infty(\R^N)$ given by
$$
E_\epsilon(x) = \min \{ E(x), \epsilon^{-\sigma} \}, \ x \in \R^N. 
$$

We claim that for each $r > 0$, there exists $\epsilon$ small enough in terms of $r$ such that 
$$
\Delta^s E_\epsilon(x) < 0 \quad \mbox{for} \ x \in B_r \setminus B_{r/2}.
$$

In fact, for $\epsilon \leq r/2$ and $r/2 \leq |x| \leq r$ we have
\begin{align*}
\Delta^s E_\epsilon(x) = & \Delta^s E(x) + C_{N,s} \int_{B_\epsilon} \frac{\epsilon^\sigma - |y|^\sigma}{|x - y|^{N + 2s}}dy \\
\leq & c |x|^{\sigma - 2s} + C_{N,s} \epsilon^\sigma \int_{B_\epsilon} \frac{dy}{|x - y|^{N + 2s}} \\
\leq & c r^{\sigma - 2s} + C_{N,s} \epsilon^\sigma 2^{N + 2s} r^{-(N + 2s)} |B_\epsilon|,
\end{align*}
where $|B_\epsilon|$ denotes the Lebesgue measure of the ball $B_\epsilon$.

Since $2s - N < \sigma$, we can take $\epsilon$ small enough in terms of $r$ such that
$$
\Delta^s E_\epsilon(x) \leq \frac{c}{2} r^{\sigma - 2s} \quad \mbox{for } \ x \in B_r \setminus B_{r/2},
$$
and the claim follows.

This function will play the role of a barrier in the strong maximum principle.

Denote 
$
C = \{ x \in \Omega : u(x) = u(x_0) \},$ which is nonempty and closed (in the induced topology of $\Omega$). Then, $\Omega \setminus C$ is open. By contradiction, we assume it is nonempty. Then, there exists $x_1 \in \Omega$ and $r > 0$ such that $B_r(x_1) \subset \Omega \setminus C$. By enlarging $r$ we can assume that $\partial B_r(x_1) \cap \partial C \neq \emptyset$. Let $x_2$ be in the intersection. We can take $r$ arbitrarily small by moving $x_1$ in the direction of $x_2$. For simplicity, we assume that $x_1 = 0$.
    
Let $v(x) = -(E_\epsilon(x) - E_\epsilon(x_2))$. Notice that $v$ is bounded, and $v(x) \geq 0$ for $|x| \geq r$. For $\beta > 0$ small enough in terms of $r$ (and $\epsilon$), we have the function
$$
x \mapsto u(x) - \beta v(x)
$$
attains a global maximum at a point $\tilde x \in \overline B_r \setminus B_{r/2}$. In fact, we have $u - \beta v < M$ in $\mathbb{R}^N \setminus \overline B_r$. Since $u < M$ in $B_{r/2}$, taking $\beta$ small enough we have $u - \beta v < M$ there. Finally, since $u(x_2) - \beta v(x_2) = M$, we conclude the existence of $\tilde x$. Then, by using $\beta v$ as a test function for $u$ at $\tilde x$, and since $D v(\tilde x) \neq 0$, we arrive at
$$
0 > |D v(\tilde x)|^\gamma \Delta^s v(\tilde x) \geq 0,
$$
from which we arrive at a contradiction.

For $N = 1$, the same proof follows the same lines above in the case $2s < 1$. For $2s > 1$, we proceed as before by considering the function
$$
v(x) = |x|^{2s - 1} - r^{2s - 1}, \quad x \in \R.
$$

Finally, for $N = 1$ and $s=1/2$, we consider 
$$
v(x) = \max \{ \log (|x|), \log (\epsilon) \} - \log(r), \quad x \in \R,
$$
see section 3 in~\cite{FQ}. The proof is now complete.
\end{proof}

%%%%%%%%%%
\section{Local $C^{1,\alpha}$-regularity estimates}\label{secreg}

In this section, we derive sharp regularity estimates for solutions of \eqref{eq1}. For $s\in (\frac{1}{2},1)$, and $\mathcal L_0$ as in \eqref{elliptic}, we restrict our analysis to the subclass of kernels $\mathcal L_\star \subset \mathcal L_0$, given by kernels $K$ with the form
\begin{equation}\label{LATAM}
K(z) = \frac{a(\hat z)}{|z|^{N + 2s}}, \quad \mbox{for} \quad \hat z = \frac{z}{|z|},
\end{equation}
where $z \in  \mathbb{R}^N\setminus \{0\}$ and $a \in L^\infty(S^{N - 1})$ is a symmetric function satisfying the ellipticity condition $\lambda \leq a \leq \Lambda$. We easily observe that family $\mathcal{L}_\star$ is related to operators $L_K$ which are linear, $2s$-homogeneous operators, and they are associated with stable stochastic L\'evy processes, see~\cite{ROS}.

For a given $q \in \R^N$, we say that a exterior data $g :\mathbb{R}^N\setminus 
\Omega \to \R$ belongs to $\mathcal P(q,\Omega)$ if $g+q\cdot x \in \mathcal{P}(\Omega)$, in sense of  \eqref{RIO}. More precisely, $g \in \mathcal{P}(q,\Omega)$ is equivalent to assuming that problem
\begin{equation}\label{eqPqsg}
\left\{ 
\begin{array}{rll} 
|Du + q|^\gamma I u & = 0 & \mbox{in } \; \Omega \\ 
u & = g & \mbox{in } \; \mathbb{R}^N\setminus \Omega,  
\end{array} 
\right.
\end{equation}
has a unique solution. Consequently, $u$ is $I$-harmonic, i.e. $Iu=0$ in $\Omega$, with $u=g$ in $\mathbb{R}^N\setminus \Omega$. From this, we denote by $\alpha_K$ the gradient H\"older regularity exponent for $I$-harmonic functions,  see \cite{CSARMA,K}.

\medskip

The main result of this section is the following:

\begin{theorem}\label{teoreg}
Let $f \in L^\infty(\Omega)$. For $g \in C(\mathbb{R}^N \setminus \Omega)$, $\sigma \in (0,2s)$ and $M > 0$, assume that $|g(x)| \leq M(1 + |x|)^\sigma$ for $x \in \mathbb{R}^N \setminus \Omega$, and $g \in \mathcal{P}(\Omega)$. Then, viscosity solutions for problem \eqref{eq1} are locally $C^{1,\alpha}(\Omega)$ for
\begin{equation}\label{alphaK}
\alpha \in \left(0,\alpha_K \right) \cap \left( 0, \frac{2s-1}{1+\gamma} \right]. 
\end{equation}
\end{theorem}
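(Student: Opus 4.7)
The plan is to combine Caffarelli's tangential analysis with the uniqueness supplied by $g\in \mathcal{P}(\Omega)$ and the $C^{1,\alpha_K}$ interior regularity of $I$-harmonic functions. Compactness is extracted once at the macroscopic scale through an approximation lemma, and then a geometric iteration, governed by a dichotomy on the size of the gradient, propagates the flatness estimate to all dyadic scales with exponent $\alpha \in (0,\alpha_K)\cap (0,\frac{2s-1}{1+\gamma}]$.

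\textbf{Approximation lemma.} Given $\epsilon>0$, I would show there exists $\delta>0$ such that whenever $u$ is a viscosity solution of $|Du|^\gamma Iu=f$ in $\Omega$ with $\|u\|_{L^\infty}\leq 1$, tail controlled by the hypothesis on $g$, and $\|f\|_{L^\infty}\leq \delta$, one has $\sup_{\Omega}|u-h|\leq \epsilon$, where $h$ denotes the (unique) $I$-harmonic function with $h=g$ in $\mathbb{R}^N\setminus \Omega$. Arguing by contradiction, a sequence $u_n$ with $\|f_n\|_\infty \to 0$ would be locally equicontinuous by Theorem~\ref{PTreg}, and along a subsequence converges uniformly to $u_\infty$ solving the homogeneous equation $|Du_\infty|^\gamma I u_\infty=0$ in $\Omega$ with exterior datum $g$; since $g\in \mathcal{P}(\Omega)$, Proposition~\ref{propestmax} together with the extremal characterization of Theorem~\ref{teomaxmingeral}(i)--(ii) forces $u_\infty=h$, contradicting the assumed separation.

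\textbf{Iterative scheme.} Fix $x_0\in \Omega$ and $\alpha \in (0,\alpha_K)\cap (0,\frac{2s-1}{1+\gamma}]$. Combining the approximation lemma with the $C^{1,\alpha_K}$ interior estimate for $h$, I would inductively construct affine functions $\ell_k(x)=a_k+b_k\cdot(x-x_0)$ and a ratio $\rho\in (0,1)$ such that $\sup_{B_{\rho^k}(x_0)}|u-\ell_k|\leq \rho^{k(1+\alpha)}$. The inductive step reduces to analyzing the rescaled function $u_k(x)=\rho^{-k(1+\alpha)}\bigl(u(x_0+\rho^k x)-\ell_k(x_0+\rho^k x)\bigr)$, which, using that $I$ annihilates linear functions, satisfies in $B_1$ the equation
\begin{equation*}
|\rho^{k\alpha}Du_k+b_k|^{\gamma}\, Iu_k(x)=\rho^{k(2s-1-\alpha)}\,f(x_0+\rho^k x).
\end{equation*}
A gradient-size dichotomy governs the step: in the degenerate regime $|b_k|\lesssim \rho^{k\alpha}$, writing $\tilde b_k=\rho^{-k\alpha}b_k$ yields $|Du_k+\tilde b_k|^\gamma Iu_k = \rho^{k(2s-1-\alpha(1+\gamma))}f$, whose effective right-hand side is bounded precisely because $\alpha(1+\gamma)\leq 2s-1$; in the non-degenerate regime $|b_k|\gtrsim \rho^{k\alpha}$, factoring out $|b_k|^\gamma$ converts the equation into $Iu_k=O(|b_k|^{-\gamma}\rho^{k(2s-1-\alpha)}f)$, which is uniformly elliptic with small right-hand side since $\alpha<2s-1$. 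In either regime, a rescaled approximation step followed by the $C^{1,\alpha_K^-}$ interior regularity of $I$-harmonic functions yields the next affine corrector $\ell_{k+1}$ with the prescribed decay.

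\textbf{Main obstacle.} The delicate point is that the uniqueness assumption $g\in \mathcal{P}(\Omega)$ need not persist for the rescaled exterior datum $u_k|_{\mathbb{R}^N\setminus B_1}$, so the approximation lemma cannot be reapplied verbatim at each scale. In the non-degenerate regime this difficulty dissolves, since the shifted equation is genuinely uniformly elliptic and comparison makes uniqueness automatic. The degenerate regime is subtler: here I would exploit the smallness of the shifted gradient $\tilde b_k$ and combine stability (Proposition~\ref{propestmax}) with the nondegeneracy criterion in Theorem~\ref{teomaxmingeral}(iv), applied to the perturbed operator $|Dv+\tilde b_k|^\gamma I v$, to recover quantitative closeness to an $I$-harmonic approximation without invoking a new uniqueness hypothesis at the rescaled scale. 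Once the inductive step is closed in both regimes, geometric convergence of $\{\ell_k\}_k$ yields the pointwise $C^{1,\alpha}$ estimate at $x_0$, and a translation--covering argument promotes it to the local $C^{1,\alpha}(\Omega)$ bound.
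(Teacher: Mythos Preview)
Your overall strategy---compactness via an approximation lemma, followed by a tangential iteration---is right, and you correctly pinpoint the main obstacle: the uniqueness hypothesis $g\in\mathcal{P}(\Omega)$ is not obviously inherited by the rescaled exterior datum $u_k|_{\R^N\setminus B_1}$. However, your proposed resolution of that obstacle has a genuine gap, and it misses the mechanism the paper actually uses.

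The paper does \emph{not} run a gradient-size dichotomy. Instead, it proves that the class $\mathcal{P}(q,\cdot)$ is invariant under the operations produced by the iteration. First, an affine-invariance lemma (Lemma~\ref{lemaSC}): if $g\in\mathcal{P}(q,\Omega)$, then $\rho(g(\lambda x)-\ell(\lambda x))\in\mathcal{P}(\rho\lambda(b+q),\lambda^{-1}\Omega)$ for any affine $\ell(x)=a+b\cdot x$. Second, an \emph{extension property} (Lemma~\ref{lemma extension}): if $g\in\mathcal{P}(q,\Omega)$ and $h$ is the $I$-harmonic function in $\Omega$ with exterior datum $g$, then for every $\Omega'\subset\subset\Omega$ one has $h|_{\R^N\setminus\Omega'}\in\mathcal{P}(q,\Omega')$. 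Its proof is a short comparison: a hypothetical extra solution in $\Omega'$, glued to $h$ outside, becomes a subsolution of the homogeneous problem in the full $\Omega$, contradicting uniqueness there. With these two lemmas the uniqueness hypothesis propagates to every dyadic scale, so the approximation lemma---stated uniformly in $q\in\R^N$, with unbounded $|q_k|$ absorbed by a normalization inside the compactness argument---can be reapplied verbatim at each step. No dichotomy is needed.

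Your degenerate-regime argument does not close. You invoke Theorem~\ref{teomaxmingeral}(iv), which yields $g\in\mathcal{P}(\Omega)$ only when the $I$-harmonic function has \emph{nonvanishing} gradient in $\Omega$. At the rescaled scale you have no such information about the $I$-harmonic function with exterior datum $u_k|_{\R^N\setminus B_1}$; boundedness or smallness of the shift $\tilde b_k$ is irrelevant to that criterion, and Proposition~\ref{propestmax} by itself does not manufacture uniqueness. Your non-degenerate regime is also not justified as written: you cannot ``factor out $|b_k|^\gamma$'' from $|\rho^{k\alpha}Du_k+b_k|^\gamma Iu_k$ to obtain a genuinely uniformly elliptic equation without first controlling $|Du_k|$, since the coefficient depends on $Du_k$ and not only on $b_k$. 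The missing idea is the extension property above---once you have it, the dichotomy becomes unnecessary and the iteration runs directly.
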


In order to prove the result above, we start with the following lemma, which states invariance of uniqueness property under dilatation and addition of linear functions.

\begin{lemma}[Invariance under affine functions]\label{lemaSC}
Assume $g \in C(\mathbb{R}^N \setminus \Omega) \cap L^1_{2s}(\mathbb{R}^N \setminus \Omega)$ and, for some $q\in \mathbb{R}^N$, $g \in \mathcal P(q,\Omega)$. Then, for each parameters $\rho, \lambda > 0$, $a \in \R$ and $b \in \R^N$, we have that 
$$
\rho(g(\lambda x) - b (\lambda x) - a) \in \mathcal P(\rho \lambda(b + q),\lambda^{-1} \Omega).
$$
\end{lemma}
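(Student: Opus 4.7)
The plan is to prove this by a direct change of variables that transports any solution of the dilated/shifted problem back to a solution of the original one, and then invokes the uniqueness hypothesis $g \in \mathcal{P}(q,\Omega)$. Concretely, given an arbitrary viscosity solution $\tilde u$ of
$$
|D\tilde u + \rho\lambda(b+q)|^\gamma I \tilde u = 0 \ \text{in} \ \lambda^{-1}\Omega, \qquad \tilde u = \tilde g \ \text{in} \ \mathbb{R}^N \setminus \lambda^{-1}\Omega,
$$
where $\tilde g(x) = \rho(g(\lambda x) - b\cdot(\lambda x) - a)$, I define
$$
u(y) := \rho^{-1}\tilde u(y/\lambda) + b\cdot y + a, \quad y \in \mathbb{R}^N,
$$
and show that $u$ solves $|Du + q|^\gamma I u = 0$ in $\Omega$ with exterior datum $g$, so that $u$ is determined by the assumption $g \in \mathcal P(q,\Omega)$, which in turn determines $\tilde u$ uniquely.

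The exterior condition is immediate from the definition of $\tilde g$: for $y \notin \Omega$ one has $y/\lambda \notin \lambda^{-1}\Omega$, whence $\tilde u(y/\lambda) = \tilde g(y/\lambda) = \rho(g(y) - b\cdot y - a)$, giving $u(y) = g(y)$. The core of the argument is the behavior of $I$ under this transformation. Using that the kernels in $\mathcal L_\star$ are $2s$-homogeneous, namely $K(\lambda z) = \lambda^{-(N+2s)}K(z)$, the substitution $z = \lambda w$ in the integral definition of $L_K$ yields $L_K[\tilde u(\cdot/\lambda)](y) = \lambda^{-2s} L_K[\tilde u](y/\lambda)$, and the affine part is annihilated by every symmetric $L_K$; taking $\inf\sup$ then gives
$$
I u(y) = \rho^{-1}\lambda^{-2s} (I\tilde u)(y/\lambda).
$$
On the gradient side, $Du(y) + q = (\rho\lambda)^{-1}\bigl[D\tilde u(y/\lambda) + \rho\lambda(b+q)\bigr]$, and combining the two identities produces
$$
|Du(y) + q|^\gamma Iu(y) = (\rho\lambda)^{-\gamma}\rho^{-1}\lambda^{-2s}\, \bigl|D\tilde u(y/\lambda) + \rho\lambda(b+q)\bigr|^\gamma (I\tilde u)(y/\lambda) = 0
$$
whenever $\tilde u$ satisfies the corresponding equation at $y/\lambda$.

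To be rigorous, the identity above has to be interpreted in the viscosity sense: a test function $\varphi$ touching $u$ from above (resp. below) at $y_0 \in \Omega$ corresponds, via $\tilde\varphi(x) := \rho(\varphi(\lambda x) - b\cdot(\lambda x) - a)$, to a test function touching $\tilde u$ at $x_0 = y_0/\lambda \in \lambda^{-1}\Omega$, and the computations above transport the viscosity inequality for $\tilde u$ into the desired one for $u$. Once $u$ is known to be a viscosity solution of~\eqref{eqPqsg} with datum $g$, the hypothesis $g \in \mathcal P(q,\Omega)$ forces $u$ to be unique, and inverting the formula $\tilde u(x) = \rho(u(\lambda x) - b\cdot(\lambda x) - a)$ shows the same for $\tilde u$, which is precisely $\tilde g \in \mathcal P(\rho\lambda(b+q),\lambda^{-1}\Omega)$.

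I do not anticipate a real obstacle: the transformation is forced by the structure of the equation and its only nontrivial ingredient is the $2s$-scaling of kernels from $\mathcal L_\star$, which is built into the class. A minor point worth verifying in passing is membership in $L^1_{2s}$ of the transformed functions, which follows from the growth assumption $|g(x)|\le M(1+|x|)^\sigma$ with $\sigma<2s$ together with $s>1/2$ (ensuring the affine correction $b\cdot y + a$ is itself in the relevant weighted space).
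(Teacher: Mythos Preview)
Your proposal is correct and follows essentially the same approach as the paper: both arguments set up the bijection $\tilde u(x)=\rho(u(\lambda x)-b\cdot(\lambda x)-a)$ between solutions of the two problems, relying on the $2s$-homogeneity of the kernels in $\mathcal L_\star$, the symmetry that kills the affine part, and the transfer of viscosity inequalities through test functions, and then conclude uniqueness of $\tilde u$ from that of $u$. The paper's proof is terser (it simply lists the three structural facts and asserts the equivalence), whereas you spell out the change of variables and the scaling factor $(\rho\lambda)^{-\gamma}\rho^{-1}\lambda^{-2s}$ explicitly, but the substance is identical.
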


\begin{proof}
The result is a consequence of the following slightly more general result: let $\rho, \lambda > 0$ and $\ell(x) = bx + a$ be a linear function. Then, the function $u$ is a viscosity solution to~\eqref{eqPqsg} if, and only if function
$$
\tilde u(x) = \rho (u(\lambda x) - \ell(\lambda x)),
$$
solves
\begin{equation}\nonumber
\left\{ 
\begin{array}{rlll} 
|Du(x) + \rho \lambda(b + q)|^\gamma I u(x) & = & 0 \quad & \mbox{for } \; x \in \lambda^{-1} \Omega, \\[0.2cm] 
u(x) & = & \rho (g(\lambda x) - \ell(\lambda x)) \quad & \mbox{for } \; x \in \mathbb{R}^N \setminus \lambda^{-1} \Omega .
\end{array} 
\right.
\end{equation}
The above result is a consequence of the following facts: for $s>1/2$, linear functions are $I$-harmonic; the kernels are symmetric; the operator $I$ is positively $2s$-homogeneous. These properties allow us to justify the above computations in the viscosity sense, in which the uniqueness property lies in applying that $g \in \mathcal P(q,\Omega)$. 
\end{proof}

We continue with the following consequence of Proposition~\ref{propestmax}.

\begin{lemma}[Stability property]\label{lemaestP}
Let $M > 0$ and $\sigma \in (0,2s)$. Consider sequences $(q_k)_k \subset \R^N$ and $(g_k)_k \subset C(\mathbb{R}^N \setminus \Omega)$, satisfying $|g_k(x)| \leq M(1 + |x|)^\sigma$, such that $q_k \to q \in \R^N$, and $g_k \to g$ locally uniformly in $\mathbb{R}^N \setminus \Omega$. Then, 
$$
g_k \in \mathcal P(q_k,\Omega) \quad \mbox{implies that}  \quad g \in \mathcal P(q,\Omega).
$$ 
\end{lemma}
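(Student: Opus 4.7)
The strategy is to reduce the claim to the stability of the extremal solutions from Proposition~\ref{propestmax} together with the characterization of membership in $\mathcal{P}(\Omega)$ via coincidence of the maximal and minimal solutions given by Theorem~\ref{teomaxmingeral}~(i).

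First I would reduce to the case $q = 0$ by a linear change of unknown. Since $s > 1/2$ and the kernels lie in $\mathcal L_\star$, linear functions are $I$-harmonic and $I$ is $2s$-homogeneous, so by the computation in the proof of Lemma~\ref{lemaSC} one has $h \in \mathcal P(p,\Omega)$ if and only if $\tilde h(x) := h(x) + p\cdot x$ belongs to $\mathcal P(\Omega)$. I therefore set
$$
\tilde g_k(x) := g_k(x) + q_k \cdot x, \qquad \tilde g(x) := g(x) + q \cdot x,
$$
and note that $\tilde g_k \to \tilde g$ locally uniformly in $\mathbb{R}^N\setminus\Omega$. Since $\{q_k\}$ is bounded, there exist $M' > 0$ and $\sigma' := \max\{\sigma, 1\}$ such that $\max\{|\tilde g_k(x)|, |\tilde g(x)|\} \leq M'(1+|x|)^{\sigma'}$ for every $x \in \mathbb{R}^N\setminus\Omega$. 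The condition $\sigma' < 2s$ holds precisely because $s > 1/2$, so the translated sequence $\{\tilde g_k\}$ and its limit $\tilde g$ fit the hypotheses of Proposition~\ref{propestmax}.

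Next I would translate the uniqueness hypothesis and pass to the limit. By Theorem~\ref{teomaxmingeral}~(i), the assumption $\tilde g_k \in \mathcal P(\Omega)$ is equivalent to the equality $\bar u_k \equiv \underline u_k$ in $\Omega$, where $\bar u_k, \underline u_k$ denote the maximal and minimal members of $\mathcal H(s,\tilde g_k,\Omega)$. Proposition~\ref{propestmax} then delivers the uniform convergences
$$
\bar u_k \to \bar u, \qquad \underline u_k \to \underline u \qquad \mbox{in } \Omega,
$$
where $\bar u, \underline u$ are the extremal solutions associated with the exterior datum $\tilde g$. Taking the uniform limit of the identity $\bar u_k \equiv \underline u_k$ yields $\bar u \equiv \underline u$ in $\Omega$. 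Since Theorem~\ref{teomaxmingeral}~(i) squeezes every element of $\mathcal H(s,\tilde g,\Omega)$ between $\underline u$ and $\bar u$, we conclude $\tilde g \in \mathcal P(\Omega)$, which by the first step is equivalent to $g \in \mathcal P(q,\Omega)$.

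The argument is mostly bookkeeping built on results already in hand. The only delicate point is making sure the polynomial growth exponent is preserved by the linear shift so that Proposition~\ref{propestmax} applies to $\{\tilde g_k\}$; this is exactly where the assumption $s > 1/2$ is used, since it guarantees $\max\{\sigma,1\} < 2s$. The viscosity-sense equivalence between $|Du + q|^\gamma I u = 0$ and $|Dv|^\gamma I v = 0$ under $v = u + q\cdot x$ has already been justified in Lemma~\ref{lemaSC}, so no additional work is needed on that front.
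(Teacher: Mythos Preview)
Your proposal is correct and follows essentially the same route as the paper: reduce to $q_k=0$ via the linear shift from Lemma~\ref{lemaSC}, then invoke Proposition~\ref{propestmax} and Theorem~\ref{teomaxmingeral}(i) to conclude that the extremal solutions for $\tilde g$ coincide. Your treatment is in fact slightly more careful than the paper's, since you explicitly check that the shifted data retain a growth exponent $\sigma'=\max\{\sigma,1\}<2s$ (using $s>1/2$) so that Proposition~\ref{propestmax} applies---a point the paper's proof leaves implicit.
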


\begin{proof}
From Lemma \ref{lemaSC}, for each integer $k>0$, we can assume $q_k = 0$, since $u$ solves~\eqref{eqPqsg} if, and only if $u(x) - q_k x \in \mathcal{P}(g - q_k x, \Omega)$. Hence, even replacing $g_k$ by $g_k - q_k x$, the stability assumptions of the lemma still hold. By Proposition~\ref{propestmax}, we have stability for extremal solutions (see Theorem \ref{teomaxmingeral}), and so, since $g_k \in \mathcal P(0,\Omega)$, we conclude that the extremal solutions associated with the exterior data $g$ coincides with the unique $I$-harmonic function.
\end{proof}

\begin{lemma}[Extension property]\label{lemma extension}
Assume $g \in \mathcal P(q,\Omega)$, and let $u$ be the $I$-harmonic function in $\Omega$ with exterior data $g$ in $\mathbb{R}^N\setminus \Omega$. Then, for each $\Omega' \subset \subset \Omega$, the function $v = u |_{\mathbb{R}^N \setminus \Omega'}$ satisfies $v \in \mathcal P(q,\Omega')$.
\end{lemma}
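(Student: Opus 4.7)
The strategy is to show that both the maximal and the minimal solutions of the shifted problem~\eqref{eqPqsg} posed on $\Omega'$ with exterior data $v$ coincide with $u|_{\overline{\Omega'}}$. Since $u$ is $I$-harmonic, it is itself one such solution, and by Theorem~\ref{teomaxmingeral}(i) (whose proof adapts to the $q$-shifted equation via Lemma~\ref{lemaSC}) every solution is sandwiched between the two extremal ones. Equality of the extremals with $u$ thus forces $v\in\mathcal P(q,\Omega')$.

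For each $\eta>0$, let $u^\eta$ be the unique solution of
$|Du^\eta+q|^\gamma Iu^\eta=-\eta$ in $\Omega$ with $u^\eta=g$ on $\mathbb R^N\setminus\Omega$, and let $w^\eta$ be the unique solution of $|Dw^\eta+q|^\gamma Iw^\eta=-\eta$ in $\Omega'$ with $w^\eta=u$ on $\mathbb R^N\setminus\Omega'$ (Theorem~\ref{teo1} supplies both). Since $|Du+q|^\gamma Iu=0\geq-\eta$, the function $u$ is a subsolution of the $-\eta$ equation on both $\Omega$ and $\Omega'$, and Proposition~\ref{comparison} (applicable because $|-\eta|>0$) yields
$$
u\leq u^\eta \text{ in } \mathbb R^N, \qquad u\leq w^\eta \text{ in }\overline{\Omega'}.
$$
In particular $u^\eta\geq u=w^\eta$ on $\mathbb R^N\setminus\Omega'$ (on $\mathbb R^N\setminus\Omega$ this is by the matching boundary data $u^\eta=g=u$, and on $\Omega\setminus\overline{\Omega'}$ it follows from the global bound just obtained). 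A further application of Proposition~\ref{comparison} on $\Omega'$, with $u^\eta$ and $w^\eta$ sharing the same source $-\eta$, gives $w^\eta\leq u^\eta$ in $\overline{\Omega'}$. Assembling,
$$
u\leq w^\eta\leq u^\eta \quad \text{in } \overline{\Omega'}.
$$

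By hypothesis $g\in\mathcal P(q,\Omega)$, so the maximal solution on $\Omega$ equals $u$; the vanishing-viscosity construction in Theorem~\ref{teomaxmingeral}(i) then forces $u^\eta\to u$ locally uniformly in $\Omega$ as $\eta\to 0^+$. Squeezing forces $w^\eta\to u$ uniformly on $\overline{\Omega'}$, which identifies the maximal solution of the $\Omega'$-problem with exterior data $v$ as $u|_{\overline{\Omega'}}$. A symmetric argument using the $+\eta$ approximations identifies the minimal solution as $u|_{\overline{\Omega'}}$ as well. Theorem~\ref{teomaxmingeral}(i) then says every solution on $\Omega'$ with exterior data $v$ lies between these coincident extremals, yielding $v\in\mathcal P(q,\Omega')$.

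The only step demanding real attention is the middle comparison $w^\eta\leq u^\eta$ on $\overline{\Omega'}$: one must ensure the exterior-data inequality $u^\eta\geq u$ is transferred to the whole of $\mathbb R^N\setminus\Omega'$, not merely to $\partial\Omega'$, and this relies on the \emph{global} bound $u^\eta\geq u$ rather than only boundary matching. Everything else is a direct invocation of comparison, existence/uniqueness for the strictly-signed inhomogeneous problem, and the vanishing-viscosity construction of extremals already presented in the paper.
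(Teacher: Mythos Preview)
Your argument is correct and proceeds along a genuinely different line from the paper's. The paper argues by contradiction: assuming non-uniqueness on $\Omega'$, it takes the maximal solution $\bar u'$ there (strictly above $u$ on $\Omega'$), extends it by $u$ to all of $\R^N$, and verifies \emph{directly from the nonlocal integral} that this extension is a viscosity subsolution of $|D\cdot+q|^\gamma I(\cdot)=0$ on the full domain $\Omega$ (for $x\in\Omega\setminus\Omega'$ one has $L_K\bar u'(x)\geq L_K u(x)$ simply because $\bar u'\geq u$ in $\Omega'$). This forces the maximal solution on $\Omega$ to lie strictly above $u$, contradicting $g\in\mathcal P(q,\Omega)$. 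Your route is instead constructive: you sandwich the $\eta$-approximants on $\Omega'$ between $u$ and the $\eta$-approximants on $\Omega$ via three applications of Proposition~\ref{comparison}, then use $g\in\mathcal P(q,\Omega)$ to make the outer bound collapse. The paper's proof isolates a clean nonlocal monotonicity fact (raising a subsolution on a subdomain preserves the subsolution property globally) and never touches the vanishing-viscosity machinery; your proof trades that structural insight for a more mechanical squeezing that leans entirely on comparison and the already-established characterization of extremals as limits of $u^\eta$, $u_\eta$. Both are short; the paper's is slightly more self-contained, while yours has the virtue of making the role of the hypothesis $g\in\mathcal P(q,\Omega)$ completely transparent (it is exactly what forces $u^\eta\to u$).
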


\begin{proof}
Assume by contradiction, that problem
\begin{equation}\nonumber
\left \{ 
\begin{array}{rlll}
|D\omega + q|^\gamma I \omega & = & 0 \quad & \mbox{in } \; \Omega' \\
\omega & = & v \quad & \mbox{in } \; \mathbb{R}^N\setminus \Omega',  
\end{array}
\right.
\end{equation}
has no unique solution. Hence, from Theorem \ref{teomaxmingeral}, we can assume that the associated maximal solution (and/or the minimal solution), denoted by $\bar u'$ ($\underline{u}'$), differs from the unique $I$-harmonic function in $\Omega'$ with exterior data $v$. Precisely, we have $\bar u' > u$ in $\Omega'$, where $u$ is the $I$-harmonic function with $u=v$ in $\mathbb{R}^N\setminus \Omega'$.

We now claim that function $\bar u'$ is a viscosity subsolution for problem~\eqref{eqPqsg}. In fact, we easily observe that one holds for $x \in \Omega'$. On the other hand, for $x \in \Omega \setminus \Omega'$, we have $\bar u'(x) = u(x)$, and for each $K \in \mathcal K$,
\begin{equation}\nonumber
\begin{array}{rlll}
L_K \bar u'(x) & = & \displaystyle\int_{\Omega'} [\bar u'(y) - \bar u'(x)] K(x - y)dy + \int_{\mathbb{R}^N \setminus \Omega'} [u(y) - \bar u'(x)] K(x - y)dy \\[0.4cm]
& \geq & \displaystyle \int_{\Omega'} [u(y) - u(x)] K(x - y)dy\; + \int_{\mathbb{R}^N \setminus \Omega'} [u(y) - u(x)] K(x - y)dy \\[0.6cm]
& = & L_K u(x).
\end{array}
\end{equation}
From this, we conclude that $\bar u'$ is a subsolution for the problem solved by $u$. Therefore, since $\bar u' > u$ in $\Omega'$, we have the maximal solution for~\eqref{eqPqsg} is different from the $I$-harmonic one, which is a contradiction.
\end{proof}

The last preliminary result is the following approximation lemma.

\begin{lemma}[Flatness improvement]\label{approximation lemma}
Assume $\overline{\alpha} \in (0, 2s - 1)$, $M>0$, and $\omega: [0,+\infty) \to \R_+$ a modulus of continuity. Given $\epsilon \in (0,1)$, there exist universal parameters $R$ large and $\delta$  small such that, for each $q \in \R^N$ and $u$ solving 
$$
-\delta \leq |Du + q|^\gamma I u \leq \delta \quad \mbox{in} \ B_1,
$$
with exterior condition $u = g$ in $\mathbb{R}^N \setminus B_1$, such that
\begin{itemize}	
\item[$(i)$] $g \in \mathcal P(q,B_1)$;

\medskip

\item[$(ii)$] $|g(x)-g(y)|\leq \omega(|x-y|) \quad \mbox{for} \ x,y \in B_R \setminus B_1$;

\medskip

\item[$(iii)$] $|u(x)|\leq M(1 + |x|^{1+\overline{\alpha}}) \quad \mbox{for } x \in \mathbb{R}^N \setminus B_1$,
\end{itemize}
there exists a $I$-harmonic function $h$ in $B_1$ such that
$$
\sup_{B_{1}}|u-h|\leq \epsilon.
$$
\end{lemma}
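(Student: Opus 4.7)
The argument is by contradiction combined with a compactness (normal family) extraction, a standard scheme for tangential approximation lemmas. Suppose the statement fails; then there exist $\epsilon_0 \in (0,1)$ and sequences $\delta_k \downarrow 0$, $R_k \uparrow \infty$, $q_k \in \R^N$, and $g_k$ satisfying $(i)$--$(iii)$ with the prescribed $M, \overline{\alpha}, \omega$, together with viscosity solutions $u_k$ of $-\delta_k \leq |Du_k + q_k|^\gamma I u_k \leq \delta_k$ in $B_1$ with $u_k = g_k$ in $\R^N \setminus B_1$, such that $\sup_{B_1}|u_k - h| > \epsilon_0$ for every $I$-harmonic $h$ in $B_1$. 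From $(iii)$ and $1+\overline{\alpha} < 2s$, the norms $\|g_k\|_{L^\infty(\partial B_1)}$ and $\|g_k\|_{L^1_{2s}(\R^N\setminus B_1)}$ are controlled by constants depending only on $M$ and $\overline{\alpha}$, so Theorem~\ref{teo1}$(ii)$ yields a uniform $L^\infty(B_1)$ bound on $u_k$. Theorem~\ref{PTreg} provides uniform interior Lipschitz estimates for $u_k$, while the boundary barrier construction in the proof of Theorem~\ref{teo1}, driven by the common modulus $\omega$ of $g_k$ near $\partial B_1$, gives a modulus of continuity of $u_k$ up to $\partial B_1$ that is uniform in $k$. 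Hypotheses $(ii)$ and $(iii)$ likewise make $(g_k)$ equicontinuous and locally bounded on $\R^N\setminus \overline B_1$. Arzelà--Ascoli plus a diagonal extraction produce a subsequence along which $u_k \to u_\infty$ locally uniformly on $\R^N$ and $g_k \to g_\infty$ locally uniformly on $\R^N\setminus \overline B_1$, with $g_\infty$ inheriting the modulus $\omega$ and the growth bound.

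Next I would dichotomize on $(q_k)$. If $(q_k)$ remains bounded, then $q_k \to q_\infty \in \R^N$ along a further subsequence. Standard stability of viscosity solutions for nonlocal equations (the uniform tail control from $(iii)$ justifies passing to the limit in the nonlocal evaluations), together with $\delta_k \to 0$, shows that
$$|Du_\infty + q_\infty|^\gamma I u_\infty = 0 \quad \text{in } B_1, \qquad u_\infty = g_\infty \quad \text{in } \R^N\setminus B_1.$$
Lemma~\ref{lemaestP} yields $g_\infty \in \mathcal P(q_\infty, B_1)$, so $u_\infty$ must coincide with the (unique) $I$-harmonic function with exterior data $g_\infty$, and hence $I u_\infty = 0$ in $B_1$. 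If instead $|q_k| \to \infty$, set $\hat q_k := q_k/|q_k|$. Dividing the viscosity inequality at a test point $x_0$ by $|q_k|^\gamma$, the prefactor $\bigl|D\varphi(x_0)/|q_k| + \hat q_k\bigr|^\gamma$ converges to $1$ (since $D\varphi(x_0)/|q_k| \to 0$ and $|\hat q_k|=1$), while the right-hand side $\pm\delta_k|q_k|^{-\gamma}\to 0$. Passing to the limit in the viscosity sense yields $I u_\infty = 0$ in $B_1$. In either case $h := u_\infty|_{B_1}$ is $I$-harmonic in $B_1$, and the locally uniform convergence $u_k \to u_\infty$ gives $\sup_{B_1}|u_k - h| < \epsilon_0$ for all sufficiently large $k$, contradicting the standing assumption.

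The main technical obstacle is the $|q_k|\to \infty$ branch of the dichotomy: one must renormalize the viscosity inequalities so that the gradient coefficient becomes a bounded, uniformly nondegenerate quantity while the nonlocal term is preserved, in order to identify the limit as the purely linear equation $Iu_\infty=0$. A secondary issue is the uniform equicontinuity of $u_k$ up to $\partial B_1$ needed to realize $u_\infty$ as a continuous function on $\R^N$ and to pass to the limit in the exterior condition; this is supplied by the boundary barriers of Theorem~\ref{teo1}, whose size depends on $g_k$ only through $M$ and the modulus $\omega$, both uniform in $k$.
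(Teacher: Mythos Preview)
Your proposal is correct and follows essentially the same compactness--contradiction scheme as the paper: negate the conclusion, extract a convergent subsequence using the uniform $L^\infty$ and H\"older bounds, pass to the limit via viscosity stability, and invoke Lemma~\ref{lemaestP} to force the limit to be $I$-harmonic, contradicting the standing inequality.

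The one place where your route differs from the paper's is the treatment of the unbounded branch $|q_k|\to\infty$. The paper appeals to Lemma~\ref{lemaSC} to \emph{normalize} the data so that, without loss of generality, $(q_k)_k$ may be taken bounded from the outset, and then runs only the bounded-$q_k$ argument. You instead keep the original $u_k$ and divide the viscosity inequality by $|q_k|^\gamma$, observing that the gradient prefactor $\bigl|D\varphi/|q_k|+\hat q_k\bigr|^\gamma\to 1$ while the right-hand side vanishes, which yields $Iu_\infty=0$ directly. Both arguments are valid. Your approach is more self-contained and avoids checking that the normalized data still satisfy $(i)$--$(iii)$ with the same constants; the paper's approach has the aesthetic advantage of unifying the two cases but leaves the precise normalization implicit. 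You also spell out the equicontinuity up to $\partial B_1$ via the boundary barriers of Theorem~\ref{teo1}, a step the paper absorbs into the phrase ``by stability of viscosity solutions''.
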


\begin{proof}
By contradiction, we assume there exists $M, \epsilon > 0$, $\overline{\alpha} \in (0,2s - 1)$, a modulus of continuity $\omega$, sequences $R_k \to +\infty$, $\delta_k \to 0$, $q_k \in \R^N$,  sequence of functions $g_k$, satisfying the assumptions $(i)$-$(iii)$ above, and sequence $u_k$ solving
$$
-\delta_k \leq |Du + q_k|^\gamma I u \leq \delta_k \quad \mbox{in} \ B_1,
$$
with exterior data $g_k$. However, one holds
\begin{equation}\label{contra}
\sup_{B_{1}}|u_k-h| > \epsilon,
\end{equation}
for every $h$ solving $Ih = 0$ in $B_1$. 

In case $q_k \to +\infty$, from Lemma~\ref{lemaSC}, we can normalize the equations and the exterior data without modifying the assumptions above. Thus, we can only assume that $(q_k)_k$ is bounded, and up to a subsequence $q_k \to q_\infty \in \mathbb{R}^N$.

Then, by stability of viscosity solutions, we have that sequence $u_k$ converges to a solution $u_\infty$ to the problem \eqref{eqPqsg} with $g_\infty$ the  local uniform limit of $g_k$. Therefore, from Lemma~\ref{lemaestP}, we conclude that $g_\infty \in \mathcal{P}(q_\infty,B_1)$, and so $u_\infty$ is $I$-harmonic, which contradicts \eqref{contra}. 
\end{proof}

From the results above and the iteration method to be argued below, we prove a discrete version of Theorem~\ref{teoreg}. We highlight the use of uniqueness invariance for rescaled functions in dyadic balls.

\begin{proposition}[Discrete oscillation estimates] \label{iterative teo}
Under assumptions of Lemma \ref{approximation lemma}, there exist universal $\rho \in (0,1)$ and $C > 0$ such that, for each continuous viscosity solution $u$ for problem \eqref{eq1}, with $\|u\|_{L^\infty(\mathbb{R}^N)} \leq 1$ and exterior condition $u = g$ in $\mathbb{R}^N \setminus B_1$, satisfying $(i)$ in the later Lemma, there holds
\begin{equation}\label{aracaju2}
\sup_{B_{\rho^k}}|u-l_k|\leq \rho^{k(1+\alpha)},
\end{equation}
for $l_k=a_k+p_kx$ and $\alpha$ as in \eqref{alphaK}, where
\begin{equation}\label{estimatescoeficientpoly}
\left\{
\begin{array}{rcl}
|a_{k+1}-a_k| & \leq & C\rho^{(1+\alpha)k},\\[0.2cm] 
|b_{k+1}-b_k| & \leq & C\rho^{\alpha k}. 
\end{array}
\right. 
\end{equation}	
\end{proposition}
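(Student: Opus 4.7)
My plan is to prove (\ref{aracaju2}) and (\ref{estimatescoeficientpoly}) by induction on $k$, following the tangential analysis scheme. The base case $k = 0$ is trivial with $l_0 \equiv 0$, since $\|u\|_{L^\infty(\R^N)} \leq 1$. For the inductive step, assume the estimates hold through step $k$; I would rescale by setting
\begin{equation*}
v(x) := \rho^{-k(1+\alpha)}\bigl(u(\rho^k x) - l_k(\rho^k x)\bigr), \quad x \in \R^N, \qquad q_k := \rho^{-k\alpha} p_k.
\end{equation*}
Using the $2s$-homogeneity of $I$ and the fact that linear functions are $I$-harmonic (since $s > 1/2$), a direct computation shows that
\begin{equation*}
|Dv + q_k|^\gamma I v(x) = \rho^{k(2s - 1 - \alpha(1+\gamma))} f(\rho^k x) \quad \text{in } B_{\rho^{-k}},
\end{equation*}
and since $\alpha \leq (2s-1)/(1+\gamma)$, the right-hand side is bounded in $L^\infty$ by $\|f\|_\infty$, with actual decay when the inequality is strict.

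A telescoping argument combining the induction hypothesis at the previous scales with (\ref{estimatescoeficientpoly}) yields $|v| \leq 1$ in $B_1$ and polynomial growth $|v(x)| \leq C(1 + |x|^{1+\overline\alpha})$ on $\R^N \setminus B_1$, where $\overline\alpha < 2s - 1$ is chosen so that $1 + \overline\alpha > \max(1+\alpha, \sigma)$. This gives assumption $(iii)$ in the approximation lemma, while a uniform modulus of continuity on compact annuli is inherited from the interior Hölder regularity of $u$ (Theorem \ref{PTreg}) together with the scaling.

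The delicate step is verifying that the exterior data $v|_{\R^N \setminus B_1}$ belongs to $\mathcal P(q_k, B_1)$. Lemma \ref{lemaSC} transfers the assumption $g \in \mathcal P(0, B_1)$ to the rescaled exterior datum $g_k^*$ on $\R^N \setminus B_{\rho^{-k}}$, placing it in $\mathcal P(q_k, B_{\rho^{-k}})$. Then Lemma \ref{lemma extension}, applied with $B_1 \subset\subset B_{\rho^{-k}}$ and the $I$-harmonic function associated with $g_k^*$ on $B_{\rho^{-k}}$, yields the uniqueness property needed on $B_1$. With this in hand, I would apply Lemma \ref{approximation lemma} to $v$ to find an $I$-harmonic function $h$ on $B_1$ with $\sup_{B_1}|v - h| \leq \epsilon$, for any prescribed $\epsilon$.

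Interior $C^{1,\alpha_K}$-regularity for $I$-harmonic functions (\cite{CSARMA, K}) then furnishes an affine function $\ell(x) = a + p \cdot x$ with universally bounded coefficients such that $\sup_{B_\rho} |h - \ell| \leq C \rho^{1+\alpha_K}$. Since $\alpha < \alpha_K$, fixing $\rho$ small so that $C\rho^{1+\alpha_K} \leq \tfrac{1}{2}\rho^{1+\alpha}$ and then taking $\epsilon \leq \tfrac{1}{2}\rho^{1+\alpha}$ gives $\sup_{B_\rho} |v - \ell| \leq \rho^{1+\alpha}$. Reversing the rescaling, $l_{k+1}(y) := l_k(y) + \rho^{k(1+\alpha)} \ell(\rho^{-k} y)$ produces an updated affine approximation satisfying (\ref{aracaju2}) at step $k+1$, with $|a_{k+1} - a_k| \leq C\rho^{k(1+\alpha)}$ and $|p_{k+1} - p_k| \leq C\rho^{k\alpha}$. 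I expect the main obstacle to be precisely the propagation of the $\mathcal P$-property after each rescaling, which is why the affine invariance (Lemma \ref{lemaSC}) and the domain-restriction inheritance (Lemma \ref{lemma extension}) are indispensable; the upper bound $\alpha \leq (2s-1)/(1+\gamma)$ enters through its role in keeping the right-hand side of the rescaled equation uniformly bounded at every step.
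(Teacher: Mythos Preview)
Your proposal is correct and follows essentially the same route as the paper: the inductive rescaling $w_k(x)=\rho^{-k(1+\alpha)}(u(\rho^k x)-l_k(\rho^k x))$, propagation of the $\mathcal P$-property via Lemma~\ref{lemaSC} followed by Lemma~\ref{lemma extension}, application of Lemma~\ref{approximation lemma}, and extraction of the affine correction from the $C^{1,\alpha_K}$ estimate for $I$-harmonic functions, with $\rho$ fixed by the gap $\alpha<\alpha_K$ and the source kept small by $\alpha\le(2s-1)/(1+\gamma)$. The only cosmetic difference is that the paper verifies the growth bound $|w_k(x)|\le 1+|x|^{1+\overline\alpha}$ by a direct inductive computation (splitting into $|\rho x|\ge 1$ and $|\rho x|<1$) rather than the telescoping you sketch, and it normalizes at the outset by assuming the equation holds on $B_{2R}$ with $\|f\|_\infty\le\delta$.
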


\begin{proof}
For the sake of convenience, we assume that $u$ solves \eqref{eq1} in the viscosity sense, for $\|f\|_{L^\infty} \leq \delta$ with $\Omega=B_{2R}$, where $\delta$ and $R$ shall be chosen as in the Lemma \ref{approximation lemma}. Also, we assume that $u(0)=0$.

We argue inductively. Denoting $l_0 = 0$ and $w_0=u$, for integers $k>0$, we shall construct a sequence of linear functions $l_k x = a_k + p_k x$, for $a_k \in \R, \ p_k \in \R^N$, and a sequence of functions $w_k$ defined as follows:
\begin{equation}\label{wk+1}
w_{k}(x) = \frac{u(\rho^{k}x) - l_k(\rho^{k}x)}{\rho^{k(1 + \alpha)}},
\end{equation}
for $\rho \in (0,1)$ to be chosen universally, and $\alpha$ as in \eqref{alphaK}. We easily see that $w_{k}$ solves
\begin{equation*}
|D w_k + \rho^{-\alpha k}p_k|^{\gamma}I(w_k) =\rho^{(-\alpha \gamma +2s-1-\alpha)k}f=:f_k \quad \mbox{in } \; B_{\rho^{-k}},
\end{equation*}
and
$$
g_k:=w_k=\rho^{-k(1+\alpha)}(g(\rho^kx)- l_k(\rho^{k}x)) \; \mbox{ in } \; \mathbb{R}^N \setminus B_{\rho^{-k}}.
$$
Hence, from Lemma \ref{lemaSC}, $g_k \in \mathcal P_{\rho^{-k\alpha}p_k}(B_{\rho^{-k}})$, and so, from Lemma \ref{lemma extension}, we obtain  
\begin{equation}\label{cond1}
g_k \in \mathcal P_{\rho^{-k\alpha}p_k}(B_1),
\end{equation}
From this, condition $(i)$ in Lemma \ref{approximation lemma} is satisfied.
In addition, since $\alpha\leq \frac{2s-1}{1+\gamma}$,
\begin{equation}\label{cond2}
\|f_k\|_{L^\infty}\leq \|f\|_{L^\infty} \leq \delta.
\end{equation}

Now, corresponding to condition $(iii)$ in Lemma \ref{approximation lemma}, assume that sequence $\{w_k\}$ satisfies
\begin{equation}\label{aracaju}
|w_k(x)| \leq 1+|x|^{\overline{\alpha}}, \quad \mbox{for } \;  x \in \mathbb{R}^N,
\end{equation}
and integers $k>0$, where 
\begin{equation}\nonumber
\left \{ 
\begin{array}{cl}  \overline{\alpha}=\alpha_K & \mbox{if}\quad \alpha_K\leq\dfrac{2s-1}{1+\gamma}, \\[0.3cm] 
\dfrac{2s-1}{1+\gamma}<\overline{\alpha}<\min\{\alpha_K,2s-1\} & \mbox{if}\quad \dfrac{2s-1}{1+\gamma}< \alpha_K. 
\end{array} 
\right .
\end{equation}

In case \eqref{aracaju} holds for a certain $k>0$, from Theorem \ref{Local Holder estimates}, we have that $w_k$ is uniformly Holder continuous in $B_R\setminus B_1$. Hence, condition $(ii)$ in Lemma \ref{approximation lemma} is satisfied for $g=w_k$. On the other hand, for each $k>0$, we can find function $h_k \in C(B_1)$, satisfying $\| h_k \|_{L^\infty(B_1)} \leq 4$ and 
\begin{equation}\label{harmonica}
\left \{ 
\begin{array}{rll}  I h_k & = 0 \quad & \mbox{in} \ B_1, \\[0.2cm] 
h_k & = w_k\quad & \mbox{in} \ \mathbb{R}^N\setminus B_1. 
\end{array} \right .
\end{equation}
Moreover, for some universal constant $\bar A > 0$, we obtain  
\begin{equation*}
\max\{|h_k(0)|,|Dh_k(0)|\} \leq \bar A,
\end{equation*}
and
\begin{equation*}
|h_k(x) - h_k(0) - Dh_k(0)\cdot x| \leq \bar A |x|^{1 + \overline{\alpha}} \quad \mbox{for each } \; x\in B_1,
\end{equation*}
see~\cite{CSARMA,K}. Since $\alpha<\alpha_K$, we select $\rho>0$ small enough for obtaining
\begin{equation}\label{rho}
\rho^{\overline{\alpha} - \alpha}(4 +  3\bar A) \leq \frac{1}{100}.
\end{equation}
In the sequel, we choose $R$ large and $\delta$ small, such that for each $w_k$ satisfying (\ref{aracaju}), one holds 
$$
\| w_k- h_k \|_{L^\infty(B_1)} \leq \rho^{1 + \alpha}/2.
$$

Next, we choose $a_k$ and $p_k$ inductively. Once $w_k$ is obtained and satisfies~\eqref{aracaju}, for $h_k$ as above, we select $\tilde a_k = h_k(0)$ and $\tilde p_k = D h_k(0)$, consider $\tilde l_k(x) = \tilde a_k + \tilde p_k x$ and define
\begin{equation}\label{lk}
l_{k + 1}(x) := l_k(x) + \rho^{k(1 + \alpha)} \tilde l_k(\rho^{-k}x), \quad x \in \R^N.
\end{equation}
Equivalently, we can denote
$$
a_{k + 1} := a_k + \rho^{1 + \alpha} \tilde a_k \quad \mbox{and} \quad p_{k + 1} := p_k + \rho^{\alpha} \tilde p_k.
$$
Starting with choices $w_0 = u$ and $l_0 = 0$, in view of definition~\eqref{wk+1}, the sequences $\{ w_k \}, \{ l_k \}$ are well-defined. Since \eqref{aracaju} is obtained, then condition \eqref{aracaju2} immediately holds. Also, assuming $C = \bar A$ in \eqref{lk}, we derive \eqref{estimatescoeficientpoly}.

\medskip

To conclude the proof, we show the validity of estimate \eqref{aracaju}. In fact, assume that the latter holds for some $k$. Define
$$
w_{k + 1}(x) := \frac{w_k(\rho x) - \tilde l_k (\rho x)}{\rho^{1 + \alpha}}.
$$ 
Consider $|x| \rho \geq 1$. Since $\alpha<\overline{\alpha}$, we conclude
\begin{equation}\nonumber
\begin{array}{rcl}
|w_{k + 1}(x)| & \leq & \rho^{-(1 + \alpha)} (|w_k(\rho x)| + |\tilde l_k(\rho x)|) \\[0.2cm]
& \leq & \rho^{-(1 + \alpha)} (1 + \rho^{1 + \overline{\alpha}}|x|^{1 + \alpha_K}) + \rho^{-(1 + \alpha)}\bar A (1 + \rho |x|) \\[0.2cm]
& \leq &  \rho^{ \overline{\alpha}-\alpha}(4 + 3\bar A) |x|^{1 +  \overline{\alpha}}.
\end{array}
\end{equation}
Hence, choosing $\rho$ as in~\eqref{rho}, we conclude $|w_{k + 1}(x)| \leq 1+ |x|^{1 + \overline{\alpha}}$. Assuming that $|x| \rho < 1$, we see that
\begin{equation}\nonumber
\begin{array}{rcl}
|w_{k + 1}(x)| & \leq & \rho^{-(1 + \alpha)} (|w_k(\rho x) - h_k(\rho x)| + |h_k(\rho x) - \tilde l_k(\rho x)|) \\[0.2cm]
& \leq & \rho^{-(1 + \alpha)}(\rho^{1 + \alpha}/2 + \bar A |\rho x|^{1 +  \overline{\alpha}}) \\[0.2cm]
& \leq & 1/2 + \bar A \rho^{\overline{\alpha} - \alpha}|x|^{1 + \overline{\alpha}},
\end{array}
\end{equation}
and again by~\eqref{rho}, we conclude the result.
\end{proof}

%%%%%%%%%%
\appendix
%%%%%%%%%%
\section{Vanishing viscosity approximation}\label{existenceapx}

Here, we prove existence and uniqueness for~\eqref{eqeps}. The arguments follow along the lines of proof of Proposition~\ref{comparison}, and for this reason, we will be sketchy in this part.

We start with comparison principle. Let $u \in USC(\R^N) \cap L^1_{2s}(\R^N)$ viscosity subsolution to~\eqref{eqeps}, and $v \in LSC(\R^N) \cap L_{2s}^1(\R^N)$ viscosity supersolution to~\eqref{eqeps}, both locally bounded in $\R^N$ and such that $u \leq v$ in $\mathbb{R}^N\setminus \Omega$. 

Assume 
$$
M:= \sup_\Omega \{ u - v \} = \max_\Omega \{ u - v \} = (u - v)(x_0) > 0.
$$

The existence of $x_0 \in \Omega$ attaining $M$ is a consequence of the upper semicontinuity of $u-v$, the boundedness of $\Omega$ and the fact that $u \leq v$ in $\mathbb{R}^N\setminus \Omega$. Moreover, we have the existence of $d_0 > 0$ such that, for each $x_0$ attaining $M$, we have $d(x_0) > d_0$.

For $\alpha > 0$, consider 
$$
\Phi(x,y) = u(x) - v(y) - \frac{1}{2\alpha^2}|x - y|^2 \quad x, y \in \Omega.
$$

By standard arguments in the viscosity theory, we have the existence of $(\bar x,\bar y) \in \Omega \times \Omega$, maximum point for $\Phi$, such that
$$
\alpha^{-2} |\bar x - \bar y|^2 \to 0, \ u(\bar x) - v(\bar y) \to M, 
$$
and $\bar x, \bar y \to \hat x \in \Omega$ as $\alpha \to 0$, with $\hat x$ attaining $M$.

Then, we can use the viscosity inequalities for $u$ at $\bar x$ and for $v$ at $\bar y$. Using the same analysis as in the proof of Proposition~\ref{comparison}, more specifically the inequalities~\eqref{testing}, we have that for each $a > 0$ and $\delta \in (0, d_0/2)$, there exists $i \in \I, j \in \J$ such that
\begin{equation*}
\begin{split}
& (\epsilon + |\bar p|^\gamma) (\alpha^{-2} o_\delta(1) + L_{ij}[B_\delta^c] u(\bar x)) \geq f(\bar x) - a/2, \\
& (\epsilon + |\bar p|^\gamma) (\alpha^{-2} o_\delta(1) + L_{ij}[B_\delta^c] v(\bar y))  \leq f(\bar y) + a/2,
\end{split}
\end{equation*}
where $\bar p = \alpha^{-2}(\bar x - \bar y)$, and $o_\delta(1) \to 0$ as $\delta \to 0$ uniformly in the rest of the parameters.

Subtracting both inequalities we arrive at
\begin{equation*}
f(\bar x) - f(\bar y) - a \leq (\epsilon + |\bar p|^\gamma) (\alpha^{-2}o_\delta(1) + L_{ij}[B_\delta^c] u(\bar x) - L_{ij}[B_\delta^c] v(\bar y))
\end{equation*}

Then, by similar arguments as in Proposition~\ref{comparison} (see the definitions of $I_1,...,I_4$ there), we arrive at
\begin{equation*}
f(\bar x) - f(\bar y) - a \leq (\epsilon + |\bar p|^\gamma) (\alpha^{-2}o_\delta(1) + o_\alpha(1) - \lambda M \int_{\mathbb{R}^N\setminus \Omega - \hat x} |z|^{-(N + 2s)}dz),
\end{equation*}
where $o_\alpha(1) \to 0$ uniformly in the rest of the parameters (here we use that $d(\hat x) \geq d_0$).

Thus, letting $\delta \to 0$ and taking $\alpha$ small enough in terms of $M, \lambda$ and $d_0$, we arrive at
\begin{align*}
f(\bar x) - f(\bar y) - a & \leq -\lambda M(\epsilon + |\bar p|^\gamma) \int_{\mathbb{R}^N\setminus \Omega - \hat x} |z|^{-(N + 2s)}dz \\
& \leq -\lambda M \epsilon\int_{\mathbb{R}^N\setminus \Omega - \hat x} |z|^{-(N + 2s)}dz,
\end{align*}
and the result follows by taking $\alpha \to 0$, the continuity of $f$ and the fact that $a > 0$ is arbitrary.

The barriers provided in the proof of Proposition~\ref{comparison} (see~\eqref{barrierV}), together with comparison principle and Perron's method lead us to the existence and uniqueness for~\eqref{eqeps}. \qed

\bigskip

{\small \noindent{\bf Acknowledgments.} D.J.A. is partially supported by Conselho Nacional de Desenvolvimento Científico e Tecnológico grant 311138/2019-5 and Paraíba State Research Foundation (FAPESQ) grant 2019/0014. D.J.A. thanks the Abdus Salam International Centre for Theoretical Physics (ICTP) and UFS-Brazil for their hospitality during his research visits. D.P.  was partially supported by Capes-Fapitec, CNPq grant 305680/2022-6 and  Fondecyt Grant No. 1201897.  D.P. thanks the Department of Mathematics of UFPB-Brazil and the Department of Mathematics of USACH-Chile for providing an excellent working environment during the development of this work. E. T. is partially supported by Fondecyt Grant No. 1201897. E.T. thanks Department of Mathematics of UFPB-Brazil, and Department of Mathematics of UFS-Brazil for their hospitality during the research visits to these institutions.}

\medskip

\bibliographystyle{amsplain, amsalpha}

%%%%%

\end{document}